\newcommand{\be}{\begin{equation}}
\newcommand{\ee}{\end{equation}}
\newcommand{\ba}{\begin{align*}}
\newcommand{\eal}{\end{align*}}
\newcommand{\lb}{\llbracket}
\newcommand{\rb}{\rrbracket}
\newcommand{\NNM}{\mathbb{N}}
\newcommand{\RRM}{\mathbb{R}}
\newcommand{\cle}{{\mathcal E}}
\newcommand{\clz}{{\cal Z}}
\newcommand{\clr}{{\cal R}}
\newcommand{\vdiv}{{\mathop{\mbox{\rm div\,}}}}
\newcommand{\clp}{{\cal P}}
\newcommand{\cln}{{\cal N}}
\newcommand{\bea}{\begin{eqnarray*}}
\newcommand{\eea}{\end{eqnarray*}}
\newcommand{\eps}{\varepsilon}
\renewcommand{\sup}{\mathop{\mbox{\rm sup}}}
\renewcommand{\dim}{\mathop{\mbox{\rm dim}}}
\newtheorem{theorem}{Theorem}[section]
\newtheorem{lemma}[theorem]{Lemma}
\newtheorem{corollary}[theorem]{Corollary}
\newtheorem{definition}[theorem]{Definition}
\newenvironment{proof}
 {\begin{trivlist}
    \item[\hskip\labelsep{\sc Proof:}\quad]}
    {\hspace*{\fill}\rule{2mm}{2mm}
  \end{trivlist}}
\newenvironment{keywords}%
   {\begin{trivlist}\item[]{\bfseries\sffamily Keywords:}\ }
   {\end{trivlist}}
\newenvironment{MSC}%
   {\begin{trivlist}\item[]{\bfseries\sffamily MSC:}\ }
   {\end{trivlist}}
\numberwithin{equation}{section}
\begin{document}

\title{Stability of Equilibria of a two-phase Stokes-Osmosis problem}
\author{Friedrich Lippoth \\ \small{Institute of Applied Mathematics, Leibniz 
University Hannover, Welfengarten 1,} \\ \small{ D-30167 Hannover, Germany }
\\ \small{e-mail: \texttt{lippoth@ifam.uni-hannover.de}} \\
\\Georg Prokert\\
\small{Faculty of Mathematics and Computer
Science, TU Eindhoven} \\ \small{P.O. Box 513 5600 MB Eindhoven, the
Netherlands} \\
\small{e-mail: \texttt{g.prokert@tue.nl}}
}

\date{}
\maketitle

\begin{center}
\end{center}

\begin{abstract}
Within the framework of variational modelling we derive a two-phase moving boundary
problem that describes the motion of a semipermeable membrane separating two viscous 
liquids in a fixed container. The model includes the effects of osmotic pressure and 
surface tension of the membrane. For this problem we prove that the manifold of steady 
states is locally exponentially attractive.

\end{abstract}

\begin{keywords}
variational modelling, two-phase Stokes equations, osmosis, moving boundary problem, maximal $L_p$-regularity 
\end{keywords}

\begin{MSC}
35R37, 35B35, 35K55, 76D07, 76M30 
\end{MSC}

\section{Introduction} \label{int}
This paper is devoted to a two-phase moving boundary problem describing  osmotic 
swelling of a closed membrane in a viscous liquid. 

Let $C \subset \mathbb{R}^N$ ($N \geq 2$) be a bounded connected open set  with 
smooth boundary representing a fixed region filled with an incompressible 
viscous liquid that moves according to the  
velocity field $u = 
u(t,x)$. Inside the liquid there is a closed connected semipermeable 
membrane $\Gamma(t) \subset C$ enclosing an open set $\Omega_+(t)$ and 
separating it from the outer phase $\Omega_-(t) := C \setminus 
\bar{\Omega}_+(t)$. In both phases a certain amount of a solute is dissolved. 
Its scalar concentration $c = c(t,x)$ evolves by convection along $u$ and 
diffusion through the liquid. It may be discontinuous across $\Gamma(t)$. Both 
the diffusivities and the viscosities are assumed to be constant and positive 
but possibly different in $\Omega_+$ and $\Omega_-$, respectively. 

The membrane is permeable for the liquid but  impermeable for the solute. Its 
deformation and movement are governed by surface tension forces, osmotic 
pressure, and the fluid motion. Based on these assumptions the 
following 
moving boundary problem can be derived using the approach of variational 
modelling, see Section 2:
\be \label{sos}
\left.\begin{array}{rclll}
-\nu_\pm \Delta u_\pm + \nabla(q_\pm + c_\pm) & = & 0 &  \mbox{ in 
$\Omega_\pm(t)$,} & t > 0,\\
\vdiv u_\pm & = & 0 & \mbox{ in $\Omega_\pm(t)$,} & t > 0,\\
\lb \tau(u,q + c) \rb n & = & H n & \mbox{ on $\Gamma(t)$,} & t > 0,\\
\lb u \rb & = & 0 & \mbox{ on $\Gamma(t)$,} & t > 0,\\
u_- & = & 0 & \mbox{ on $\partial C$,} & t > 0, \\
\\
\partial_t c_\pm - \kappa_\pm  \Delta c_\pm + \nabla c_\pm \cdot u_\pm & = & 0 & 
\mbox{ in $\Omega_\pm(t)$,} & t > 0,\\
\kappa_\pm \partial_n c_\pm + c_\pm (\lb c \rb + H)  & = & 0 & \mbox{ on 
$\Gamma(t)$,} & t > 0, \\
\partial_n c_- & = & 0 & \mbox{ on $\partial C$,} & t > 0, \\
\\
V_n & = & H + \lb c \rb + u \cdot n & \mbox{ on $\Gamma(t)$,} & t > 0, \\
\\
\Gamma(0) & = & \Gamma^0, & & \\
c_\pm(0) & = & c_\pm^0 & \mbox{ in $\Omega_\pm(0)$,} & 
\end{array}\right\}
\ee
where we used the  notation $u_\pm := u|_{\Omega_\pm}$, $c_\pm := 
c|_{\Omega_\pm}$. The brackets $\lb \cdot \rb$  indicate the jump of a quantity across 
$\Gamma(t)$, i.e. 
\[
\lb w(t,\cdot) \rb(x) :=  \lim_{y \in \Omega_+(t), y \rightarrow x} w(t,y) 
- \lim_{y \in \Omega_-(t), y \rightarrow x} w(t,y)
\]
for $x \in \Gamma(t)$ and $w: \Omega_+(t) \cup \Omega_-(t) \rightarrow 
\mathbb{R}$. Further, $H = H(t,x)$ denotes the $(N-1)$ - fold mean curvature of 
the closed compact hypersurface $\Gamma(t) = \partial \Omega_+(t)$ at the point 
$x \in \Gamma(t)$, oriented in the way that spheres have negative curvature, 
while $V_n$ is the normal velocity of the family $\{\Gamma(t)\}$ w.r.t the unit 
normal field $n = n(t)$ of $\Gamma(t)$ pointing outward $\Omega_+(t)$. The 
operator $\partial_n$ takes the directional derivative of a sufficiently regular 
function w.r.t the normal field $n(t)$. If no confusion seems likely, we use the 
same symbol $\partial_n$ to denote the derivative in the direction normal to 
$\partial C$ and exterior to $C$ as well. The symbol $q = q(t,x)$ stands for the 
hydrodynamic pressure and 
\[
\tau_\pm(u_\pm,q) := \nu^\pm \eps(u_\pm) - q_\pm \, \mbox{Id}  := 
\nu^\pm (\nabla u_\pm + (\nabla u_\pm)^T) - q_\pm \, \mbox{Id}, \quad 
q_\pm := q|_{\Omega_\pm}, 
\]
is the hydrodynamic stress tensor.  Note that the initial velocity $u(0)$ is 
uniquely determined by $c(0)$ and $\Gamma(0)$ as we shall discuss later in some 
detail, cf. Section \ref{stokes}.

System (\ref{sos}) is written in dimensionless form. The given positive  
constants $\kappa_\pm$ and $\nu_\pm$ carry information about physical 
parameters 
such as diffusivity of the solute, viscosity of the liquid in both phases and 
permeability of the membrane to solvent. For later use we introduce the 
piecewise constant functions 
\[
\nu(x,t) := \left\{ \begin{array}{ll} \nu_+ & x \in \Omega_+(t),  \\ \nu_- & x 
\in \Omega_-(t), \end{array} \right. \qquad \kappa(x,t) := \left\{ 
\begin{array}{ll} \kappa_+ & x \in \Omega_+(t), \\ \kappa_- & x \in 
\Omega_-(t). 
\end{array} \right.
\]
In the corresponding one-phase situation, a detailed derivation  of 
the model within the framework of variational modelling has been given in
\cite{lpp}. The two-phase problem is obtained in a parallel fashion. 
Therefore we restrict ourselves here to a brief recapitulation of the 
chosen setup which will be given in Section \ref{modsec}.

The paper \cite{lpp} also contains a short-time existence result for 
classical solutions for the one-phase problem. For the simpler limit problem in 
which the membrane moves through an immobile liquid the 
existence of classical solutions for a short time has been established in 
\cite{lipr1}, and the paper \cite{lipr2} deals with a stability analysis of its 
equilibria. For different modelling approaches (excluding fluid motion) as 
well as analytic results in even more special situations such as radial symmetry 
we refer to \cite{Pi, rub, Ve92, Ve00, zaal1, zaal2} and the references given in 
\cite{lpp}.

In this paper we focus on the equilibria of  system (\ref{sos}) and their 
stability properties. These equilibria form a finite dimensional 
submanifold of the phase space. Our main result states that this manifold is 
locally exponentially attractive, i.e the system is normally stable in the 
sense of  \cite{PSZa}.

While the main line of the proof is parallel to the one in \cite{lipr2}, 
we have to deal with the additional difficulty of handling the nonlocal 
solution operator of the two-phase Stokes system (\ref{sos})$_1$ - 
(\ref{sos})$_5$. In particular, the results of \cite{dpz} that are a crucial 
ingredient of the stability analysis in \cite{lipr2, PSZ} 
are no longer directly applicable. Additionally, one has to discuss the 
full two-phase Stokes system with respect to well-posedness and regularity. As 
these results do not seem to be readily and explicitly available in the 
literature, we include a proof of them in an appendix.

The present paper is organized as follows. In Section \ref{modsec} we 
explain briefly how our model can be derived within the framework of 
variational modelling. Section \ref{eq} identifies the equilibria 
of system (\ref{sos}). In Section \ref{lineq} we transform the problem to a 
fixed reference domain, determine the linearization of the transformed 
problem around an arbitrary fixed equilibrium and analyse some spectral  
properties of the corresponding linear operator. In this section we also give a 
precise formulation of our main result (Theorem \ref{MTh}), 
which is proved in Section \ref{sol}. The appendix (Section \ref{app}) contains 
a detailed discussion of the full two-phase Stokes system 
(\ref{2psC1}) and some abstract facts that are helpful for the 
spectral analysis (Lemma \ref{spectrum}, Corollary \ref{gap}, Lemma 
\ref{decomp}). 
\section{Modelling} \label{modsec}
We use the same modelling approach as in \cite{lpp} and derive  our model 
from the following building blocks:
\begin{itemize} 
\item[i)] We consider paths in a {\em state manifold} $\clz$  consisting of 
pairs $(\Omega_+,c)$ of a simply connected domain $\Omega_+$ satisfying 
$\bar{\Omega}_+ \subset C$ and a nonnegative solute concentration $c:\bar C 
\rightarrow \mathbb{R}$ that may be discontinuous across $\partial \Omega_+$. 
The domain $\Omega_+$ and the container $C$ uniquely determine $\Omega_-$. 
\item[ii)] On $\clz$ we define the {\em energy functional} 
\begin{equation} \label{ef}
\cle(\Omega_+,c) := \gamma \int_{C} c\ln c\,dx + \alpha |\partial\Omega_+| 
\end{equation}
with positive constants $\alpha$ and $\gamma$, cf. \cite{lpp} for  a more 
detailed discussion of their physical meaning. This choice includes diffusion of 
the solute and surface tension of the membrane as driving mechanisms of the 
evolution.
\item[iii)] The processes that dissipate energy are solvent motion,  solute 
flux, and passage of solvent through the membrane. Taking into account 
incompressibility of the solvent and mass conservation of the solute, these 
processes can be represented by triples 
\[
\{(u,f,V_n)\,|\,\vdiv u=0 \mbox{ in $C$, } \lb u \rb = 0  \mbox{ on $\partial 
\Omega_+$, } f_\pm \cdot n = c_\pm V_n\mbox{ on $\partial \Omega_+$}\}           
\]
which we collect in the {\em process space} $\clp_{(\Omega_+,c)}$. 
\item[iv)] The {\em dissipation functional} is defined on $\clp_{(\Omega_+,c)}$  
and given by   
\[
\Psi_{(\Omega_+,c)}(u,f,V_n)  :=  \frac{1}{2}\int_{C} 
\frac{\nu_1 |f-cu|^2}{c}\,dx +  \frac{1} {2}\int_{C} \nu_2 |\eps(u)|^2\,dx  
 + \; \frac{\nu_3}{2}\int_{\partial\Omega_+}(u\cdot n-V_n)^2\,d\sigma, 
\]
where $\nu_j = \nu_j^\pm$ in $\Omega_\pm$ ($j = 1,2$) and $\nu_3$ are  positive 
constants related to mobility of the solute, viscosities of the solvent in both 
phases and to the membrane's permeability to the solvent, cf. again \cite{lpp}. 
\item[v)] Observe that the elements of the tangent spaces $T_{(\Omega_+,c)}  
\clz$ of the state manifold $\clz$ can be represented by pairs $(V_n,\dot{c})$, 
where $V_n:\;\partial\Omega_+\longrightarrow\RRM$ is a normal velocity and 
$\dot{c}$ is a concentration change. Since mass conservation of the solute is 
expressed by the relation $\dot{c} + \mbox{div}f = 0$, it seems natural to 
define the {\em process map} $\Pi_{(\Omega_+,c)}: \clp_{(\Omega_+,c)} 
\rightarrow T_{(\Omega_+,c)} \clz$ defined by 
\[
\Pi_{(\Omega_+,c)}(u,f,V_n)=(V_n,-\vdiv f). 
\]
\end{itemize}
The model (\ref{sos}) is now determined by the dynamical system on $\clz$  
\begin{equation}\label{varmin1}
\dot z=\Pi_z w^\ast, \qquad z = (\Omega_+,c),
\end{equation}
where $w^\ast$ is the solution to the minimization problem
\begin{equation}\label{varmin2}
\Psi_z(w)+\cle'(z)[\Pi_z w]\;\longrightarrow\;\min,\qquad w\in\clp_z, 
\end{equation}
cf. \cite{lpp}, and by an appropriate scaling.
\section{Equilibria} \label{eq}
Observe that by construction the system (\ref{sos}) is a gradient flow w.r.t the functional
\[ \cle(\Omega_+,c) = \int_{C} c\ln c\,dx + |\Gamma| \] 
(cf. \cite{lpp} Section $2$ for a more detailed discussion of this fact). Hence, the functional $\cle$ is a Ljapunov function for the system (\ref{sos}). Indeed, assuming smoothness and strict positivity of concentrations, integration by parts yields  
\begin{equation} \label{lf2}
\frac{d}{dt} \; \cle(\Omega_+(t),c(t)) = - \int_C \frac{|\nabla c(t)|^2}{c(t)} - \int_{\Gamma(t)} (\lb c(t) \rb + H(t))^2 - \frac{1}{2} \int_C |\eps(u(t))|^2.
\end{equation}
Let $(u,q,c,\Omega_+)$ be an equilibrium solution to (\ref{sos}) (i.e. $(u,q,c,\Omega_+)$ is constant in time, $\Gamma$ is a closed connected hypersurface, and $u,q,c$ are continuously differentiable away from $\Gamma$). Since (\ref{lf2}) vanishes at equilibria, Korn's inequality implies that $u=0$. Moreover, $c$ must be constant in both phases and $\lb c \rb = -H$.  Thus, also $H$ is constant, so that $\Gamma$ is a sphere. The first equation in (\ref{sos}) implies 
then that $q$ is constant in both phases, and from the third equation one concludes that $\lb q \rb n = 0$ on $\Gamma$. Summarizing:
\begin{lemma}
A tupel $(u,q,c,\Omega_+)$ is an equilibrium solution to {\rm(\ref{sos})}  iff 
$\Omega_+$ is a ball of some radius $R$, $\bar{\Omega}_+ \subset C$, $\lb c \rb 
=  (N-1)/R$, $u = 0$ and $q$ is constant in $C$.
\end{lemma}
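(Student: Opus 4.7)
The plan is to invoke the Ljapunov identity (\ref{lf2}) derived immediately before the lemma and sharpen the observations already sketched there into an iff statement.

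For the nontrivial direction (equilibrium $\Rightarrow$ stated form), I would use the fact that at a stationary solution $\frac{d}{dt}\cle = 0$, so each of the three manifestly nonnegative integrals on the right hand side of (\ref{lf2}) must vanish. The velocity dissipation term $\int_C|\eps(u)|^2$ combined with the no-slip condition $u_- = 0$ on $\partial C$ and Korn's inequality yields $u \equiv 0$. Vanishing of $\int_C |\nabla c|^2/c$ together with strict positivity of $c$ forces $\nabla c_\pm = 0$, so $c_\pm$ is constant in each phase. Vanishing of $\int_\Gamma (\lb c\rb + H)^2$ yields $H = -\lb c\rb$ on $\Gamma$, so $H$ is constant along $\Gamma$.

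The key geometric step is then Alexandrov's theorem: a closed connected embedded $C^2$-hypersurface of constant mean curvature in $\RRM^N$ must be a sphere. Hence $\Omega_+$ is a ball of some radius $R$, and the orientation convention stated in Section \ref{int} (spheres have negative curvature) gives $H = -(N-1)/R$, i.e. $\lb c\rb = (N-1)/R$. To recover the pressure, I substitute $u \equiv 0$ and piecewise constant $c$ into (\ref{sos})$_1$ to get $\nabla q_\pm = 0$, so $q_\pm$ is constant in each phase. Reading the stress-jump condition (\ref{sos})$_3$ with $u = 0$ gives $-\lb q+c\rb n = Hn$, and combining with $\lb c\rb = -H$ yields $\lb q\rb = 0$, so $q$ is a single constant on all of $C$.

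The converse is a direct verification: inserting the stated configuration — $u = 0$, piecewise constant $c$ with $\lb c\rb = (N-1)/R = -H$, constant $q$, and $V_n = 0$ since $\Gamma$ is time-independent — reduces each line of (\ref{sos}) to an identity; the concentration equation and both Neumann conditions are trivially satisfied, and the kinematic condition $V_n = H + \lb c\rb + u\cdot n$ is $0 = 0$. The only step which is not a straightforward substitution is the appeal to Alexandrov's theorem, and this is what I expect to be the sole potential obstacle; it goes through as long as the notion of equilibrium solution assumed in the lemma provides sufficient regularity and embeddedness of $\Gamma$, which is built into the setup of Sections \ref{int}--\ref{modsec}.
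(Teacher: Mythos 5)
Your proof is correct and follows essentially the same route as the paper: read off vanishing of each of the three nonnegative terms in the Ljapunov identity (\ref{lf2}), deduce $u\equiv 0$ via Korn, $c$ piecewise constant, and $\lb c\rb = -H$ constant, then conclude $\Gamma$ is a sphere and recover the pressure from (\ref{sos})$_1$ and the stress-jump condition. The only genuine difference is that you name Alexandrov's theorem explicitly for the ``constant $H$ implies sphere'' step (and you supply the routine converse), whereas the paper states the conclusion without attribution; this is a minor addition of rigour rather than a different argument.
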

\section{Linearization at an equilibrium} \label{lineq}
We fix now a single equilibrium $(0,\tilde{q},\tilde{c},D_+)$ and  assume 
w.l.o.g. that $D_+ = \mathbb{B}(0,1)$ and $\lb \tilde c \rb = N-1 =: m$. We 
further define $S:=\partial \mathbb{B}(0,1)$, $D_- := C \setminus\bar 
D_+$ and keep 
these notations fixed hereafter. 

In order to solve system (\ref{sos}) we are going to consider a set of  
transformed equations given  over $D_\pm$ as fixed reference domains. The 
unknown family of surfaces $\{ \Gamma(t) \}$ will be described by a signed 
distance function with respect to the unit sphere. The ansatz is standard and 
has already been used in \cite{lipr2} in an identical way. 

The mapping 
\[
X: S \times (-1,1) \rightarrow \mathbb{R}^N, \qquad (x,s) \mapsto (1+s)x,
\]
is a smooth diffeomorphism onto its range. Fix $0<a<1$ small enough that  
$\bar{D} \subset C$, where $D:=\mbox{range}(X|_{S \times  (a,a)})$. As it is 
convenient, we decompose the inverse of $X:=X|_{S \times (-a,a)}$ into 
$X^{-1}=(P,\Lambda): D \rightarrow S \times (-a,a)$, where $P$ is the metric 
projection onto $S$, and $\Lambda$ is the signed distance function with respect 
to $S$, i.e.
$P(x)=x/|x|$, $\Lambda(x) = |x|-1$. Let $\tilde{a} \in (0,a/4)$ and
\[
\mbox{Ad}:=\{ \sigma \in C^1(S); \; \Vert \sigma \Vert_{C(S)} < \tilde{a} \}.
\]
It is well-known that, given $\sigma \in \mbox{Ad}$, the mapping  
$\theta_{\sigma}(x):=(1 + \sigma(x))x$ is a diffeomorphism mapping $S$ onto 
$S_\sigma:=\theta_\sigma[S]$. We extend this diffeomorphism to the whole of 
$\mathbb{R}^N$: Let $\chi \in C^{\infty}(\mathbb{R},[0,1])$ satisfy 
$\chi|_{[-\tilde{a},\tilde{a}]} \equiv 1$, $\chi|_{(-\infty,-3\tilde{a}]} \equiv 
\chi|_{[3\tilde{a},\infty)} \equiv 0$, $\Vert \chi' \Vert_{\infty} < 
1/\tilde{a}$. Then the mapping  
\begin{equation}
\label{Fort}
y \mapsto 
\left\{ \begin{array}{lcl}
X(P(y), \Lambda(y) + \chi(\Lambda(y)) \cdot \sigma(P(y)), & \mbox{if} & y \in D \\
y,                                         & \mbox{if} & y \not \in D \end{array} \right.
\end{equation}
$(\sigma \in \mbox{Ad})$, again denoted by $\theta_\sigma$, is an appropriate 
extension, the so-called Hanzawa diffeomorphism. We have 
$\theta_{\sigma} \in
\mbox{Diff}(\mathbb{R}^N,\mathbb{R}^N)$. Moreover, $\theta_{\sigma} \equiv 
\mbox{id}$ outside $D$,  in particular in a sufficiently small open neighborhood 
of $\partial C$. Moreover, denoting by $D_{\sigma,+}$ the domain enclosed by 
$S_\sigma$ and letting $D_{\sigma,-}:=C \setminus \bar{D}_{\sigma,+}$, we have 
that  
\[
\theta_\sigma |_{D_\pm} \in \mbox{Diff}(D_\pm,D_{\sigma,\pm}),
\]
$\sigma \in \mbox{Ad}$, and $\partial D_{\sigma,+} = S_\sigma$,  $\partial 
D_{\sigma,-} = S_\sigma \cup \partial C$. Finally note that the surface 
$S_\sigma$ is the zero level set of the function $\varphi_\sigma$ defined by
\[
\varphi_\sigma(x) = \Lambda(x) - \sigma(P(x)),
\]
$x \in D$, $\sigma \in \mbox{Ad}$, i.e. $S_\sigma = \varphi_\sigma^{-1}[\{ 0
\}]$. For later use we set
\[
L_\sigma(x) := |\nabla \varphi_\sigma|(\theta_\sigma(x)).
\]
It can be shown that $L_\sigma > 0$ on $S$ for all $\sigma \in \mbox{Ad}$. 
\newline
Given $\sigma \in \mbox{Ad}$, let $\theta^{*}_{\sigma}$, $\theta_{*}^{\sigma}$   
denote the pull-back and push-forward operators induced by $\theta_{\sigma}$, 
i.e. $\theta^{*}_{\sigma} \, f = f \circ \theta_{\sigma}$, $\theta_{*}^{\sigma} 
\, g = g \circ \theta_{\sigma}^{-1}$. If the functions $b,\rho$ are time 
dependent, i.e. $b=b(t,x)$, $\rho=\rho(t,x)$, we define $[\theta^{*}_{\rho} \, 
b] (t,x) := [\theta^{*}_{\rho(t)} \, b (t,\cdot)](x)$, analogue for 
$\theta_{*}^{\rho}$.

Using this notation, for $\rho: J \subset [0,\infty) \rightarrow \mbox{Ad} \cap 
C^2(S)$  and  sufficiently smooth $w_\pm \in \mathbb{R}^{D_\pm}$ we introduce 
the transformed operators
\[
\begin{array}{rcl}
n(\rho)  & := & \theta^{*}_{\rho} \, n_{[S_{\rho}]}; \\
H(\rho)  & := & \theta^{*}_{\rho} \, H_{[S_{\rho}]}; \\
\mathcal{A}_\pm(\rho)w_\pm & := & \theta^{*}_{\rho} (\Delta (\theta_{*}^{\rho} w_\pm)); \\
\mathcal{B}_\pm(\rho)w_\pm & := & \theta^{*}_{\rho} (\nabla (\theta_{*}^{\rho} 
w_\pm)  |_{S_{\rho}}) \cdot n(\rho); \\
\mathcal{K}_\pm(\rho)w_\pm & := & \theta^{*}_{\rho} (\nabla (\theta_{*}^{\rho} w_\pm)).
\end{array}
\]
Letting $\mu_\pm := c_\pm \circ \theta_\rho$, $\mu_{\pm,0} := c_{\pm,0} \circ  
\theta_{\rho_0}$,  instead of (\ref{sos}), we study the following problem on 
$D_\pm$ as fixed reference domains: 
\be \label{trp}
\left.\begin{array}{rcll}
\partial_t \mu_\pm - \kappa_\pm A(\rho) \mu_\pm + \mathcal{K}_\pm(\rho)  \mu_\pm 
\cdot s_\pm(\rho) + R_\pm(\rho,\mu) & = & 0 & \mbox{ in $D_\pm$,} \\
\kappa_\pm B(\rho) \mu_\pm + \mu_\pm (\lb \mu \rb + H(\rho)) & = & 0 &  \mbox{ 
on $\partial D_+$,} \\
\partial_n \mu_- & = & 0 & \mbox{ on $\partial C$,} \\
\partial_t \rho - L(\rho) [H(\rho) + \lb \mu \rb + s(\rho) \cdot n] & = & 0 &  \mbox{ on 
$\partial D_+$,} \\ 
\mu_\pm(0) & = & \mu_{\pm,0} & \mbox{ in $D_\pm$,} \\
\rho(0) & = & \rho_0, & 
\end{array}\right\}
\ee
where $s(\rho) := \theta^*_\rho u$,   
\be \label{stosolrho}
\left.\begin{array}{rclll}
-\nu^\pm \Delta u_\pm + \nabla q_\pm & = & 0 & \mbox{ in $D_{\rho,\pm}$,} & t > 0,\\
\vdiv u_\pm & = & 0 & \mbox{ in $D_{\rho,\pm}$,} & t > 0,\\
\lb \tau(u,q) \rb n & = & H n & \mbox{ on $S_\rho$,} & t > 0,\\ 
\lb u \rb & = & 0 &  \mbox{ on $S_\rho$,} & t > 0,\\
u_- & = & 0 & \mbox{ on $\partial C$,} & t > 0,
\end{array}\right\}
\ee
and $s_\pm(\rho) := s(\rho)|_{D_\pm}$. The terms 
$\clr_\pm$ arise  from the transformation of the time derivative $(\mu_\pm)_t$ 
and are determined by
\[
\clr_\pm(w_\pm,\sigma)(y) = r_0( L_\sigma [H(\sigma) + \lb w \rb + s(\rho)  
\cdot n(\rho)],B_{\mu}(\sigma)w_\pm)(y),  \qquad y \in D_\pm,
\]
where $w_\pm \in C^1(\overline{D}_\pm)$,  $\sigma \in \mbox{Ad}$ and
\begin{equation}
\label{BLM}
r_0(h,k)(y):=
\left\{ \begin{array}{lcl}
\chi(\Lambda(y)) \cdot h(P(y)) \cdot k(y), & \mbox{if} & y \in D \\
0,                                         & \mbox{if} & y \in \bar{C} \setminus 
D,  \end{array} \right.
\end{equation}
\[
B_{\mu}(\sigma)w_\pm (y) =  \theta^{*}_{\sigma} \, \nabla (\theta_{*}^{\sigma} 
w_\pm)  (y)  \cdot (n_{S} \circ P)(y) , \qquad y \in D_\pm
\]
($n_{S}$ being the exterior unit normal field of $S$).  The explicit calculation 

of $\clr_\pm$ is  straightforward, cf. again \cite{E04}. 

Linearization of (\ref{trp}) around the equilibrium  $(\mu_\pm, \rho) = 
(\tilde{c}_\pm, 0)$ yields the following system for the shifted variable $\mu - 
\tilde{c}$, denoted again by $\mu$: 
\be \label{lin1}
\left.\begin{array}{rcll}
\partial_t \mu_\pm - \kappa_\pm \Delta \mu_\pm & = & F_\pm(\mu_\pm, \rho) &  
\mbox{ in $D_\pm$,} \\
\kappa_\pm \partial_n \mu_\pm + \tilde{c}_\pm (\lb \mu \rb + (\Delta_S + m) 
\rho)  & = & G_\pm(\mu_\pm, \rho) & \mbox{ on $S$,} \\
\partial_n \mu_- & = & 0 & \mbox{ on $\partial C$,} \\
\partial_t \rho - [(\Delta_S + m) \rho + \lb \mu \rb + s'(0) \rho \cdot n]  & = 
& \tilde{H}(\mu_\pm, \rho) & \mbox{ on $S$,} \\
\mu_\pm(0) & = & \mu_{\pm,0} & \mbox{ in $D_\pm$,} \\
\rho(0) & = & \rho_0, & 
\end{array}\right\}
\ee
with suitable nonlinear remainders $F,G,H$ that act smoothly between the 
function spaces we are going to use, cf.  Lemma $4.2$ in \cite{lipr1} and 
Corollary \ref{trastoop} in the present paper. By construction, they satisfy 
\[F_\pm(0) = G_\pm(0) = \tilde{H}(0) = 0,\quad  F'_\pm(0) = 
G'_\pm(0) = \tilde{H}'(0) = 0.\] By $\Delta_S$ we denote the 
Laplace-Beltrami  operator of the unit sphere. After some algebra, letting 
$\alpha_\pm := \kappa_\pm / \tilde{c}_\pm$, $\tilde{\Delta} := \Delta_S + m$ and
\[ \begin{array}{rcl}
G_1(\mu_\pm,\rho) & = & G_+(\mu_\pm,\rho)/\tilde{c}_+; \\
G_2(\mu_\pm,\rho) & = & G_{+}(\mu_\pm,\rho)/\tilde{c}_+ -
G_{-}(\mu_\pm,\rho)/\tilde{c}_-; \\
G_3(\mu_\pm,\rho) & = & \tilde{H}(\mu_\pm,\rho) + G_1(\mu_\pm,\rho), 
\end{array} \]
we get 
\be \label{lin2}
\left.\begin{array}{rcll}
\partial_t \mu_\pm - \kappa_\pm \Delta \mu_\pm & = & F_\pm(\mu_\pm, \rho) &  
\mbox{ in $D_\pm$,} \\
\alpha_+ \partial_n \mu_+ + \lb \mu \rb + \tilde{\Delta} \rho & = & G_1(\mu_\pm, 
\rho) &  \mbox{ on $S$,} \\
\lb \alpha \partial_n \mu \rb & = & G_2(\mu_\pm, \rho) & \mbox{ on $S$,} \\
\partial_n \mu_- & = & 0 & \mbox{ on $\partial C$,} \\
\partial_t \rho + \alpha_+ \partial_n \mu_+ - s'(0) \rho \cdot n & = & 
G_3(\mu_\pm, \rho) &  \mbox{ on $S$,} \\
\mu_\pm(0) & = & \mu_{\pm,0} & \mbox{ in $D_\pm$,} \\
\rho(0) & = & \rho_0.  &  
\end{array}\right\}
\ee
We close this section by defining the abstract setting for our analysis 
and giving a precise statement of our main result. Let $p > N+2$. For $s \geq 0$ 
and a Banach space $Y$, $M \in \{ D_\pm, S, [0,T], [0,\infty) \}$ ($T > 
0$) we denote by $W_p^s(M,Y)$ the $L^p$-based Sobolev space of order $s$. In 
particular, if $s\notin\NNM$, this fractional-order Sobolev space coincides with 
the Besov space $B_{pp}^s(M,Y)$ (cf. \cite{Tr1}). For the sake of brevity we 
write $W_p^s(M):=W_p^s(M,\RRM)$ and introduce the notations $W_p^s(D_\pm) := 
W_p^s(D_+) \times W_p^s(D_-)$, $(\mu_\pm,\rho) := (\mu_+,\mu_-,\rho)$. Let  
\[
\begin{array}{rcl}
E_1 & := & \{ (\mu_\pm,\rho) \in W_p^2(D_\pm) \times W_p^{3-1/p}(S); \;  
\partial_n \mu_- = 0 \mbox{ on } \partial C\}; \\
E & := & \{ (\mu_\pm,\rho) \in W_p^{2-2/p}(D_\pm) \times W_p^{3-3/p}(S); \;  
\partial_n \mu_- = 0 \mbox{ on } \partial C\}; \\
E_0 & :=& L^p(D_\pm) \times W_p^{1-1/p}(S)
\end{array}
\]
and for an interval $J \subset [0,\infty)$ 
\[
\mathbb{E}(J) :=  L_p(J,E_1) \cap \big( W_p^{1}(J, L^p(D_\pm))  \times 
W_p^{(3-1/p)/2}(J,L_p(S)) \big). 
\] 
We further define spaces of exponentially decaying functions
\[
\mathbb{E}(\delta) := \{ (\xi_\pm,\sigma) \in \mathbb{E}(\mathbb{R}^+); \;  
e^{\delta t} (\xi_\pm,\sigma) \in \mathbb{E}(\mathbb{R}^+) \} 
\]
($\delta > 0$), equipped with the norm $\Vert (\xi_\pm,\sigma) \Vert_{\mathbb{E}(\delta)} := \Vert e^{\delta t} (\xi_\pm,\sigma) \Vert_{\mathbb{E}(\mathbb{R}^+)}$.
and recall the standard embedding result 
\begin{equation} \label{emb0}
\mathbb{E}(J) \hookrightarrow C(J,E).  
\end{equation}
We formally introduce the operators $\hat{L}$, $\hat{K}$  and $B$ 
by their action as follows:
\[
\hat{L}(\mu_\pm,\rho) := (\kappa_\pm \Delta \mu_\pm, - \alpha_+ \partial_n 
\mu_+),  \quad \hat{K}(\mu_\pm,\rho) := (0,0,s'(0) \rho|_S  \cdot n),
\]
\[
B(\mu_\pm,\rho) := (\alpha_+ \partial_n \mu_+ + \lb \mu \rb + \tilde{\Delta} 
\rho,  \lb \alpha \partial_n \mu \rb), 
\]
where $u = u(\rho) = s'(0) \rho$ (and a suitable $p$) solve 
\be \label{2pssphere}
\begin{array}{rcll}
\nu_\pm \Delta u_\pm - \nabla p_\pm & = & 0 &\mbox{ in $D_\pm$,}\\
\vdiv u_\pm & = & 0 & \mbox{ in $D_\pm$,}\\
\lb \tau(u,p) \rb n & = & \tilde{\Delta} \rho n & \mbox{ on $S$,} \\
\lb u \rb & = & 0 & \mbox{ on $S$,} \\
u_- & = & 0 & \mbox{ on $\partial C$}.
\end{array}
\ee
Then, with ${\bf F}:= (F_\pm,G_3)$, ${\bf G}:= (G_1,G_2)$, (\ref{lin2})   
can be written as an abstract evolution problem 
\begin{equation} \label{eveq}
\partial_t \mu - (\hat{L} + \hat{K}) (\mu) = {\bf F}(\mu), \quad B \mu =  {\bf 
G}(\mu),\quad \mu(0) = \mu_0 := (\mu_{\pm,0},\rho_0), \quad \mu := (\mu_\pm, 
\rho).
\end{equation}
Solutions to (\ref{eveq}) are paths in the manifold 
\[
\mathcal{M} := \{ \mu \in E; \; B \mu = {\bf G}(\mu) \}. 
\]
They are supposed to possess the following regularity:
\begin{definition} \label{Stsol}
A global strong solution of the evolution problem {\rm(\ref{eveq})} is a 
solution $\mu = (\mu_\pm,\rho): [0,\infty) \rightarrow E$ such that 
\[
\mu|_{[0,T]} \in \mathbb{E}([0,T]) \qquad \forall \; T > 0.
\]
\end{definition}
Observe that the set of equilibria of (\ref{eveq}) is 
\[
\mathcal{E} := \{ \eps \in \mathcal{M}; \; - (\hat{L} + \hat{K},B) (\eps) =  
({\bf F},{\bf G})(\eps) \},
\]
and that these equilibria correspond to the steady states of system (\ref{sos}). It is of crucial importance for our analysis that $\mathcal{E}$ is a  
submanifold of $\mathcal{M}$ of dimension $N+2$, cf. Lemma 2.1 in \cite{lipr2}, 
Proposition 6.4 in \cite{MSM}. Now we are prepared to state the main theorem of 
this paper:
\begin{theorem} \label{MTh}
There exist $\gamma, \delta > 0$ such that, given $\mu_0 \in 
\mathbb{B}_E(0,\gamma) \cap \mathcal{M}$, problem {\rm(\ref{eveq})} admits a 
unique global strong solution $\mu = \xi + e$, where  $(\xi,e) \in 
\mathbb{E}(\delta) \times \mathcal{E}$. Moreover, $\mu_0 \mapsto (\xi,e) \in 
C^1(\mathbb{B}_E(0,\gamma) \cap \mathcal{M}, \mathbb{E}(\delta) \times 
\mathcal{E})$.
\end{theorem}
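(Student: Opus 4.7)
The plan is to follow the general normal stability strategy of \cite{PSZa,PSZ} and, more directly, its adaptation to a closely related problem in \cite{lipr2}, the main new difficulty being the nonlocal operator $\hat K$ built from the two-phase Stokes solution operator. First I would focus on the linear problem
\be \label{linevo}
\partial_t \mu - (\hat L + \hat K)\mu = f,\qquad B\mu = g,\qquad \mu(0)=\mu_0,
\ee
and establish that the pair $((\hat L+\hat K,B),E_1)$ enjoys maximal $L_p$-regularity on $\mathbb{R}^+$, i.e.\ for $f\in L_p(\mathbb{R}^+,L^p(D_\pm)\times W_p^{1-1/p}(S))$, $g$ in the appropriate trace space, and $\mu_0\in E$ satisfying the linear compatibility $B\mu_0 = g(0)$, problem (\ref{linevo}) admits a unique solution in $\mathbb{E}(\mathbb{R}^+)$ with continuous dependence. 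The unperturbed part $(\hat L,B)$ is a standard parabolic diffusion/Stefan--Gibbs--Thomson coupling and has maximal regularity; the Stokes-generated nonlocal term $\hat K$ is a lower order, bounded perturbation thanks to the regularity results for (\ref{2pssphere}) proved in the appendix, so the perturbation is handled by a Neumann series argument combined with the abstract perturbation lemmata referenced at the end of Section \ref{int}.

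Next I would carry out the spectral analysis of the generator $A:=(\hat L+\hat K, B)$ on $E_0$ with domain encoding the linear boundary condition $B\mu=0$. Using Lemma \ref{spectrum}, Corollary \ref{gap} and Lemma \ref{decomp} from the appendix I would show that the spectrum of $A$ consists of a discrete set, that $0$ is a semisimple eigenvalue whose (generalized) eigenspace is precisely $T_{\tilde\mu}\mathcal{E}$ of dimension $N+2$, and that $\sigma(A)\setminus\{0\}\subset\{\mathrm{Re}\,\lambda \le -3\delta\}$ for some $\delta>0$. The identification of the kernel uses the gradient-flow structure and (\ref{lf2}) together with the fact that the equilibrium set is the $(N+2)$-parameter family of translated spheres with constant concentrations, exactly as in Lemma 2.1 of \cite{lipr2}. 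Once this is done, the spectral projection $P_c$ onto $\ker A$ commutes with the semigroup, and the restriction of $A$ to $\ker(P_c)$ generates an exponentially decaying semigroup with decay rate $\le -2\delta$ on suitable interpolation spaces.

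With these linear facts in hand, the nonlinear step is a contraction / implicit-function-theorem argument in the weighted space $\mathbb{E}(\delta)$. Fix an equilibrium $e\in\mathcal{E}$ close to $\mu_0$. Writing $\mu=e+\xi$ with $\xi\in\mathbb{E}(\delta)\cap\ker(P_c)$ turns (\ref{eveq}) into
\[
\partial_t\xi-(\hat L+\hat K)\xi = \mathbf{F}(e+\xi)-P_c[\cdots],\quad B\xi=\mathbf{G}(e+\xi)-Be,\quad\xi(0)=\mu_0-e,
\]
with the $P_c$-components absorbed into the parameter $e$. The nonlinearities are smooth with vanishing derivatives at $0$ (cf.\ Corollary \ref{trastoop}), hence they map $\mathbb{E}(\delta)$ into the corresponding data spaces with quadratic bounds. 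Regarding the pair $(\xi,e)$ as unknowns and the initial datum $\mu_0$ as parameter, the map
\[
\Phi:\mathbb{E}(\delta)\cap\ker(P_c)\times\mathcal{E}\times(\mathbb{B}_E(0,\gamma)\cap\mathcal{M})\longrightarrow\cdots
\]
defined by the left-hand side minus the right-hand side of the reformulated equation has $D_{(\xi,e)}\Phi(0,e_0,e_0)$ invertible by the maximal regularity and spectral splitting; hence the implicit function theorem produces a $C^1$ map $\mu_0\mapsto(\xi,e)$ on a small ball, which yields the theorem.

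The main obstacle, as the authors signal in the introduction, is controlling the nonlocal Stokes term $\hat K$: it is not covered by the reference semigroup machinery of \cite{dpz}, so both maximal regularity of the perturbed generator and the spectral identification (in particular, that $\hat K$ does not spoil the semisimplicity of $0$ nor push eigenvalues back into the right half-plane) have to be obtained by exploiting the explicit information on (\ref{2pssphere}) from the appendix; any failure at this point would invalidate the whole decomposition.
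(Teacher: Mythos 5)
Your proposal captures the correct strategy and matches the paper's approach in all essentials: the paper simply states that the normal-stability machinery of \cite{lipr2}, Section 4 carries over verbatim once $\hat L$, $L$ are replaced by $\hat L+\hat K$, $L+K$, with the spectral facts of Lemmas \ref{spectrum}--\ref{decomp} supplying the decomposition and the decay rate, and the reformulation-plus-implicit-function-theorem argument you sketch is exactly the content of that reference. Your identification of the nonlocal Stokes term $\hat K$ as the obstacle, and of Corollary \ref{trastoop} and the appendix Stokes regularity as the tools to overcome it, is also on target.

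One small but substantive discrepancy: the specific device the paper singles out as the genuine modification of \cite{lipr2} is Lemma \ref{isohp}, asserting that $(\omega-(\hat L+\hat K),B)$ is an isomorphism simultaneously in the scales $E\to W_p^{-2/p}(D_\pm)\times[W_p^{1-3/p}(S)]^3$ and $E_1\to L_p(D_\pm)\times[W_p^{1-1/p}(S)]^3$. You propose to handle the perturbation by a Neumann-series argument, whereas the paper argues differently: it combines the isomorphism result for $(\omega-\hat L,B)$ from \cite{lipr2} with compactness of $\hat K$ to conclude Fredholm index zero, reduces the claim to injectivity, and for the lower-regularity scale bootstraps a kernel element $\mu\in E$ up to $E_1$ by observing via Theorem \ref{stoop2} that $s'(0)\rho|_S\cdot n$ is already smooth enough to lie in the better data space, after which Lemma \ref{spectrum} finishes the job. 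Your Neumann-series plan would require showing that $\hat K(\omega-\hat L)^{-1}$ has small norm for large $\omega$ in both scales, which is not automatic from compactness alone; the Fredholm-plus-bootstrap route avoids this quantitative smallness issue. This is a matter of technique rather than a gap in the overall architecture, but it is precisely where the paper departs from \cite{lipr2}, so it deserves to be made explicit rather than waved through with an appeal to perturbation lemmata that, in this paper, serve the spectral analysis and not maximal regularity.
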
  
\section{Spectral analysis and proof of the main result} \label{sol}
In this section we study properties of the operator $L+K: D(L) \subset E_0  
\rightarrow E_0$, where $L \mu := \hat{L} \mu$, $K \mu := \hat{K} \mu$ and 
\[
\begin{array}{lcllcccc}
D(L) & := & \{ & (\mu_\pm, \rho) \in E_1; & & & \\
& & & \alpha_+ \partial_n \mu_+ + \lb \mu \rb + \tilde{\Delta} \rho & = & 0 &  
\mbox{ on $S$,} & \\
& & & \lb \alpha \partial_n \mu \rb & = & 0 & \mbox{ on $S$,} & \\
& & & \partial_n \mu_- & = & 0 & \mbox{ on $\partial C$.} & \} 
\end{array} 
\]
We will identify operators and vector spaces with their  complexifications 
without further mentioning. 
\begin{lemma} \label{spectrum}
\begin{itemize}
\item[\rm (i)] The spectrum of $L + K$ consists purely of isolated eigenvalues
having eigenspaces of finite dimension.
\item[\rm (ii)] The value $\lambda=0$ is an eigenvalue of $L + K$  with 
$\dim\cln(L + K)=N+2$. 
\item[\rm (iii)] All other eigenvalues of $L + K$ are real and negative.
\end{itemize}
\end{lemma}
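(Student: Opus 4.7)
\textbf{Proof plan for Lemma \ref{spectrum}.} I would prove the three parts in order.

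For part (i), the operator $L$ acts on $D(L)\subset E_0$ as a two-phase diffusion coupled through the interface boundary conditions to the scalar unknown $\rho$ via the trace $\alpha_+\partial_n\mu_+$. Its domain embeds compactly into $E_0$ by the Sobolev embeddings $W^{2,p}(D_\pm)\hookrightarrow L^p(D_\pm)$ and $W^{3-1/p,p}(S)\hookrightarrow W^{1-1/p,p}(S)$. The resolvent estimates for the analogous two-phase parabolic operator with dynamic boundary conditions in \cite{lipr2} carry over to our container geometry with only notational changes, yielding a nonempty resolvent set and hence compact resolvent for $L$. The perturbation $K$ sends $\rho$ to the boundary value of $(s'(0)\rho)\cdot n$ on $S$; by the two-phase Stokes regularity established in the appendix, $K:E_1\to E_0$ is bounded and in fact gains regularity on $S$, hence is $L$-compact. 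Therefore $L+K$ has compact resolvent, and its spectrum consists of isolated eigenvalues with finite-dimensional eigenspaces.

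For part (ii), the tangent space to $\mathcal{E}$ at the fixed equilibrium is contained in $\ker(L+K)$: linearizing the equilibrium identity $(L+K)e+({\bf F},{\bf G})(e)=0$ along $\mathcal{E}$ and using ${\bf F}'(0)={\bf G}'(0)=0$ gives $\dim\ker(L+K)\ge\dim\mathcal{E}=N+2$. For the reverse inequality I would solve $(L+K)(\mu_\pm,\rho)=0$ directly. The elliptic equations together with the Neumann condition on $\partial C$, the flux matching $\lb\alpha\partial_n\mu\rb=0$, and the identity $\alpha_+\partial_n\mu_+ = s'(0)\rho\cdot n$ from the $\rho$-equation confine $\rho$ to the span of the spherical harmonics $Y_0$ and $Y_1$: indeed $\tilde\Delta=\Delta_S+(N-1)$ has spectrum $\{(N-1)-l(l+N-2)\}_{l\ge 0}$, vanishing only on the $N$-dimensional $Y_1$-sector, and for such $\rho$ the Stokes data $\tilde\Delta\rho\,n$ is either zero or a constant multiple of $n$, whence the unique solution of (\ref{2pssphere}) has zero normal trace by rotational symmetry. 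The remaining interface condition $\lb\mu\rb+\tilde\Delta\rho=0$ then forces $\mu_\pm$ to be constants $c_\pm^*$ linked by $(N-1)\rho_0+(c_+^*-c_-^*)=0$, giving exactly $N+2$ free parameters.

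For part (iii) I would exploit the gradient-flow structure. Equip $E_0$ with the weighted inner product
\[
\langle (\mu,\rho),(\nu,\sigma)\rangle_\star := \sum_\pm \int_{D_\pm}\frac{\mu_\pm\overline{\nu_\pm}}{\tilde c_\pm}\,dx + \int_S \rho\,\overline{\sigma}\,d\sigma.
\]
Testing the eigenvalue equation $(L+K)(\mu,\rho)=\lambda(\mu,\rho)$ against $(\mu,\rho)$ in this product, integrating by parts in the bulk (using $\lb\alpha\partial_n\mu\rb=0$ to isolate the common interface flux $\beta := \alpha_+\partial_n\mu_+ = \alpha_-\partial_n\mu_-$), substituting $\lb\mu\rb=-\beta-\tilde\Delta\rho$ and $\beta = s'(0)\rho\cdot n - \lambda\rho$, integrating by parts on $S$ via $\tilde\Delta$, and finally invoking the weak formulation of (\ref{2pssphere}) tested against $u=s'(0)\rho$ itself (which converts $\int_S(s'(0)\rho\cdot n)\overline{\tilde\Delta\rho}$ into the manifestly non-negative Stokes dissipation $\sum_\pm(\nu_\pm/2)\int_{D_\pm}|\epsilon(u_\pm)|^2$), produces the identity
\[
\lambda\,\mathcal{A}(\mu,\rho) \;=\; -\sum_\pm\alpha_\pm\|\nabla\mu_\pm\|_{L^2(D_\pm)}^2 - \|\beta\|_{L^2(S)}^2 - \sum_\pm\frac{\nu_\pm}{2}\|\epsilon(u_\pm)\|_{L^2(D_\pm)}^2,
\]
where $\mathcal{A}(\mu,\rho) := \sum_\pm\int_{D_\pm}|\mu_\pm|^2/\tilde c_\pm\,dx + \|\nabla_S\rho\|_{L^2(S)}^2 - m\|\rho\|_{L^2(S)}^2$ is real. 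Both sides being real forces $\lambda\in\mathbb{R}$ whenever $\mathcal{A}(\mu,\rho)\neq 0$. The main obstacle is that $\mathcal{A}$ is \emph{indefinite} on $E_0$: the $Y_0$- and $Y_1$-sectors of $\rho$ contribute negatively. To close the argument I would decompose $E_0$ into invariant subspaces for $L+K$ (enabled by the compact resolvent from (i) together with a self-adjointness computation with respect to $\langle\cdot,\cdot\rangle_\star$); combined with the explicit kernel from (ii), this shows that the offending directions all lie in $\ker(L+K)$, so $\mathcal{A}>0$ on every eigenspace with $\lambda\neq 0$, forcing $\lambda<0$.
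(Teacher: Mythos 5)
Your overall architecture matches the paper's: compactness arguments for (i), an explicit kernel computation for (ii), and an energy/quadratic-form identity for (iii). However, there are genuine gaps in parts (ii) and (iii).

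In part (ii), the claim that the equations "confine $\rho$ to the span of $Y_0$ and $Y_1$" and that "the remaining interface condition then forces $\mu_\pm$ to be constants" is asserted, not derived. None of the listed relations alone forces $\nabla\mu_\pm=0$ or $\eps(u(\sigma))=0$; you would have to test the kernel equation against the solution and obtain a sign-definite identity. This is precisely what the paper does: it introduces the bilinear form
\[
\langle(w_+,w_-,\sigma),(u_+,u_-,\theta)\rangle := \tilde c_-\!\int_{D_+}\!w_+u_+ + \tilde c_+\!\int_{D_-}\!w_-u_- -\tilde c_+\tilde c_-\!\int_S \sigma \,\tilde{\Delta}\theta
\]
and shows $-\langle (L+K)z,\bar z\rangle$ equals a sum of Dirichlet integrals plus $\|\partial_n w_+\|_{L^2(S)}^2$ plus the Stokes dissipation $\tfrac12\int_C|\eps(u(\sigma))|^2$, all nonnegative. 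Vanishing of this forces $w_\pm$ constant and $u(\sigma)=0$ in $C$; then $u=0$ in the Stokes system forces $\nabla p_\pm=0$, so the stress-jump condition gives $\tilde\Delta\rho = -(p_+-p_-) = $ const, which is what really restricts $\rho$ to $Y_0\oplus Y_1$. You use exactly such an identity in your part (iii) but do not deploy it in part (ii), so the inequality $\dim\cln(L+K)\le N+2$ is left unsupported. A smaller slip: your appeal to "rotational symmetry" to get zero normal trace cannot work since $\partial C$ is an arbitrary smooth boundary, not a sphere; the correct reason is uniqueness of the Stokes solution (for constant stress-jump data the solution is $u=0$ with piecewise-constant pressure).

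In part (iii), note that the inner product $\langle\cdot,\cdot\rangle_\star$ you introduce is not the one that actually appears in your derived identity: the right-hand side is $\lambda\,\mathcal{A}(\mu,\rho)$, and $\mathcal{A}$ is (up to a positive factor $\tilde c_+\tilde c_-$) exactly the paper's indefinite form above, not $\|\cdot\|_\star^2$. This indefiniteness is the whole difficulty, and your closing step — "decompose into invariant subspaces via self-adjointness with respect to $\langle\cdot,\cdot\rangle_\star$" — is both unproved and unlikely to hold as stated, since $L+K$ is not $\langle\cdot,\cdot\rangle_\star$-symmetric (the coupling through $\tilde\Delta\rho$ and the nonlocal $K$ sit in the wrong places). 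The paper sidesteps this: it shows $\mathcal{A}(z)\ge 0$ for all \emph{eigenvectors} $z$, with equality only for $z\in\cln(L+K)$, using the crucial identity $\int_S u(\sigma)\cdot n\,dS=0$ (divergence-freeness), and then applies an abstract lemma (Lemma \ref{specnonneg}) that needs only nonnegativity of $-\langle(L+K)z,\bar z\rangle$ together with this eigenvector positivity — no spectral decomposition and no self-adjointness w.r.t. a definite inner product. That eigenvector positivity is the nontrivial core of (iii) and needs to be established, not merely referenced as "combined with the explicit kernel."

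Part (i) is fine in outline and essentially the same as the paper, though the paper is more specific: it cites \cite{dpz} for generation of an analytic semigroup by $L$, notes $K$ is of order $1$ and hence a relatively compact perturbation, and invokes Kato III.6.29 for the compact-resolvent conclusion.
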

\begin{proof} (i) The operator $L$ generates a strongly continuous analytic  
semigroup on $E_0$ (Theorem 2.2 in \cite{dpz}). As the operator $\rho \mapsto 
s'(0) \rho|_S \cdot n$ is of order $1$, $K$ is a relatively compact 
perturbation which implies that $L+K$ also generates a strongly continuous 
analytic semigroup on $E_0$ and in particular has a nonempty resolvent set. 
Since $D(L)$ is compactly embedded in $E_0$, the statement follows by \cite{Ka}, 
Theorem III. 6.29.

(ii) We introduce the bilinear form $\langle\cdot,\cdot\rangle$ by
\[\langle(w_+,w_-,\sigma),(u_+,u_-,\theta)\rangle  :=  \tilde 
c_-\int_{D_+}w_+u_+\,dx  + \tilde c_+\int_{D_-}w_-u_-\,dx 
-\tilde c_+\tilde c_-\int_S \sigma \tilde{\Delta} \theta \, dS.\]
Letting $u(\sigma) := s'(0)\sigma$ we observe that 
\begin{eqnarray} \label{Lnonneg}
& & -\langle (L + K) (w_+,w_-,\sigma),(\bar w_+,\bar 
w_-,\bar\sigma)\rangle\nonumber \\
&= &-\tilde c_-\kappa_+\int_{\Omega_+}\Delta w_+\bar w_+\,dx
-\tilde c_+\kappa_-\int_{\Omega_-}\Delta w_-\bar w_-\,dx
+\tilde c_+\tilde c_-\alpha_+\int_S\partial_n w_+(\alpha_+\partial_n \bar w_+
+\llbracket \bar w\rrbracket)\,dS\nonumber \\
  & &+ \int_{S} u(\sigma) \cdot n \tilde{\Delta} \bar\sigma \, dS\nonumber \\
&= &\tilde c_-\kappa_+\|\nabla w_+\|^2_{L^2(\Omega_+)}
+\tilde c_+\kappa_-\|\nabla w_-\|^2_{L^2(\Omega_-)}
+\kappa_+\alpha_+\tilde c_-\|\partial_n w_+\|^2_{L^2(S)}\nonumber \\
  && + \tfrac{1}{2} (\int_{D_+} |\eps(u(\sigma))|^2 \, \,dx + \int_{D_-}  
|\eps(u(\sigma))|^2 \,dx) \geq 0
\end{eqnarray}
for all $(w_+,w_-,\sigma)\in D(L)$. Suppose $(L+K)(w_+,w_-,\sigma)=0$ for  
$(w_+,w_-,\sigma)\in D(L)$. Then (\ref{Lnonneg}) implies that $w_\pm$ are 
constant on $D_\pm$ (thus $\llbracket w \rrbracket$ is constant on $S$) and that 
$u(\sigma) = 0$ in $C$. From elementary properties of $\Delta_S$ and the results 
from Sections \ref{stokeswsol}, \ref{stokesssol} we hence get that 
\be\label{basis}
\cln(L+K)={\rm span}\{(m,0,-1),\,(0,m,1),\,(0,0,x_1),\ldots,(0,0,x_N)\} =:  \{ 
\eps_1, ..., \eps_{N+2} \}. 
\ee

(iii) The computation (\ref{Lnonneg}) shows that  $-\langle (L + K) 
(w_+,w_-,\sigma),(\bar w_+,\bar w_-,\bar\sigma)\rangle \geq 0$ for all 
$(w_+,w_-,\sigma)\in D(L)$. Using the fact that $\int_S u(\sigma) \cdot n = 0$, 
we can show in completely the same fashion as in the proof of Lemma $3.1$ iii) 
in \cite{lipr2} that $\langle (w_+,w_-,\sigma),(\bar w_+,\bar 
w_-,\bar\sigma)\rangle \geq 0$ for all eigenvectors $(w_+,w_-,\sigma)$ of $L+K$ 
with equality only if $(w_+,w_-,\sigma) \in \mathcal{N}(L+K)$. Hence, the 
assertion follows from Lemma \ref{specnonneg} and (i).
\end{proof}

\begin{corollary}\label{gap}  We have
\[\sup{\rm Re}(\sigma(L+K)\setminus\{0\})<0.\]
\end{corollary}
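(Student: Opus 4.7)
The plan is to observe that Corollary~\ref{gap} follows almost directly from Lemma~\ref{spectrum}, once one unpacks the word ``isolated'' in part~(i). First, by part~(iii), every nonzero element of $\sigma(L+K)$ is a strictly negative real number, so $\operatorname{Re}(\sigma(L+K)\setminus\{0\})\subset(-\infty,0)$; the only issue that could conceivably prevent the supremum from being strictly negative would be an accumulation of negative eigenvalues at the origin.

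To rule this out, I would invoke part~(i): the spectrum consists purely of isolated eigenvalues. Since $\lambda=0$ is itself an eigenvalue by part~(ii), the fact that $0$ is isolated in $\sigma(L+K)$ means precisely that there exists $\delta>0$ such that $\sigma(L+K)\cap\{|\lambda|<\delta\}=\{0\}$. Combining this with part~(iii) yields $\sigma(L+K)\setminus\{0\}\subset(-\infty,-\delta]$, and taking the supremum of real parts then gives $\sup\operatorname{Re}(\sigma(L+K)\setminus\{0\})\le-\delta<0$.

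If a slightly more concrete route is preferred, the underlying compact-resolvent structure from the proof of Lemma~\ref{spectrum}(i) can be invoked directly: since $D(L)=E_1$ is compactly embedded into $E_0$ and the resolvent set of $L+K$ is nonempty, the operator $(\lambda_0-L-K)^{-1}$ is compact for any $\lambda_0\in\rho(L+K)$, so its spectrum is a null sequence, and the spectral mapping $\lambda\mapsto 1/(\lambda_0-\lambda)$ forces the eigenvalues of $L+K$ to tend to $\infty$ in modulus. In particular, only finitely many eigenvalues can sit in any bounded region of $\mathbb{C}$, which again yields a positive gap. I do not anticipate a genuine obstacle; the corollary is essentially an immediate consequence of the preceding lemma, with compactness being the feature that prevents isolated eigenvalues from accumulating at a finite point such as $0$.
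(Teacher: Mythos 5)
Your proof is correct and matches the paper's intent: the paper states Corollary~\ref{gap} without a separate proof, treating it as an immediate consequence of Lemma~\ref{spectrum}, and your argument simply makes that deduction explicit. Both of your routes (isolation of $0$ in the spectrum combined with part~(iii), or the compact-resolvent argument forcing eigenvalues to escape to infinity) are valid and amount to the same underlying mechanism the paper relies on.
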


\begin{lemma} \label{decomp} 
The zero eigenvalue of $L+K$ is semisimple, i.e. $X=\cln(L+K)\oplus \clr(L+K)$.
\end{lemma}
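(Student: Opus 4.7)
The plan is to establish the equivalent condition $\cln(L+K)\cap\clr(L+K)=\{0\}$, which is meaningful because $0$ is an isolated eigenvalue of finite algebraic multiplicity by Lemma \ref{spectrum}(i). The two key tools are symmetry of $L+K$ with respect to the bilinear form $\langle\cdot,\cdot\rangle$ introduced in the proof of Lemma \ref{spectrum}, and the equality case of the positivity identity (\ref{Lnonneg}).

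First I would prove symmetry, $\langle(L+K)u,v\rangle=\langle u,(L+K)v\rangle$ for all $u,v\in D(L)$. Green's identity in $D_+$ and $D_-$ yields boundary contributions that I would reorganize using the conditions in $D(L)$ (Neumann on $\partial C$, transmission $\lb\alpha\partial_n\mu\rb=0$, and $\alpha_+\partial_n\mu_+ + \lb\mu\rb+\tilde\Delta\mu_\rho=0$ on $S$); all surface terms then cancel provided the Stokes contribution is reciprocal, i.e.
\[
\int_S(s'(0)\sigma\cdot n)\,\tilde\Delta\theta\,dS=\int_S(s'(0)\theta\cdot n)\,\tilde\Delta\sigma\,dS.
\]
This reciprocity follows from the weak form of (\ref{2pssphere}) tested with the Stokes velocities $u(\theta)$ and $u(\sigma)$, together with the symmetry of $\int_C\nu\,\eps(u(\sigma)){:}\eps(u(\theta))\,dx$. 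Next I would identify the global radical $\mathrm{Rad}:=\{w\in E_0:\langle w,u\rangle=0\text{ for all }u\in E_0\}$; testing in the three coordinate slots of $E_0$ produces $\mathrm{Rad}=\{0\}\times\{0\}\times\cln(\tilde\Delta)=\mathrm{span}\{\eps_3,\ldots,\eps_{N+2}\}$, since $\cln(\tilde\Delta)=\mathrm{span}\{x_1,\ldots,x_N\}$. A direct Gram-matrix computation on the basis (\ref{basis}) then shows block-diagonal structure with vanishing cross and translation blocks; the $2\times 2$ block in $\eps_1,\eps_2$ has determinant $-m^3\tilde c_+\tilde c_-|D_+|\bigl\{(N-1)^2|D_-|+N|C|\tilde c_-\bigr\}\neq 0$ after substituting $|S|=N|D_+|$ and the equilibrium identity $\tilde c_+-\tilde c_-=m$. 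Hence the radical of $\langle\cdot,\cdot\rangle$ restricted to $\cln(L+K)$ coincides with $\mathrm{Rad}$.

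To conclude, take $y\in\cln(L+K)\cap\clr(L+K)$ with $y=(L+K)z$ for some $z\in D(L)$. Symmetry gives $\langle y,v\rangle=\langle z,(L+K)v\rangle=0$ for every $v\in\cln(L+K)$, so $y$ lies in the restricted radical, which by the previous step equals $\mathrm{Rad}$; hence $\langle y,w\rangle=0$ for all $w\in E_0$. Choosing $w=z$ yields $-\langle(L+K)z,z\rangle=-\langle y,z\rangle=0$, and the equality case of (\ref{Lnonneg})---vanishing of each of $\|\nabla z_\pm\|_{L^2(D_\pm)}$, $\|\partial_n z_+\|_{L^2(S)}$, and $\|\eps(u(z_\rho))\|_{L^2(C)}$---together with the boundary conditions of $D(L)$ places $z$ in $\cln(L+K)$, so $y=(L+K)z=0$. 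The main obstacle is the reciprocity in the symmetry step for the nonlocal Stokes operator $s'(0)$: establishing it cleanly requires the weak formulation of the two-phase Stokes system (\ref{2pssphere}), which is precisely the new difficulty (relative to \cite{lipr2}) flagged in the introduction.
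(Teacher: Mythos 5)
Your argument is correct, and it reaches the conclusion along a route that is genuinely different from the paper's, though the computations that make both work are closely related. You establish full symmetry of $L+K$ with respect to the bilinear form $\langle\cdot,\cdot\rangle$ and then locate the radical of the restricted form on $\cln(L+K)$ via the Gram matrix on the basis (\ref{basis}); the Stokes reciprocity $\int_S(s'(0)\sigma\cdot n)\tilde\Delta\theta\,dS=\int_S(s'(0)\theta\cdot n)\tilde\Delta\sigma\,dS$ that you flag does indeed follow from the weak form of (\ref{2pssphere}) tested against $u(\theta)$ and $u(\sigma)$. The paper deliberately \emph{avoids} proving symmetry: it invokes the purely algebraic Lemma~\ref{RN0a} with a two-dimensional functional $\Phi$ encoding mass conservation, and the verification that $\Phi$ vanishes on $\clr(L+K)$ uses only the divergence theorem together with $\int_S s'(0)\sigma\cdot n\,dS=0$. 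On closer inspection the two constructions coincide: up to the scalings $m\tilde c_-$ and $-m\tilde c_+$, the components of $\Phi$ are precisely the pairings $\langle\cdot,\eps_1\rangle$ and $\langle\cdot,\eps_2\rangle$, so the paper's condition $\clr(L+K)\subset\cln(\Phi)$ is exactly the two instances of your symmetry claim paired against $\eps_1,\eps_2$, and the nonsingularity of the paper's $2\times 2$ matrix $A$ is your Gram block. Your determinant $-m^3\tilde c_+\tilde c_-|D_+|\bigl\{(N-1)^2|D_-|+N|C|\tilde c_-\bigr\}$ checks out, and the concluding step, $\langle(L+K)z,z\rangle=0$ combined with the equality case of (\ref{Lnonneg}), is shared by both proofs. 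The trade-off is that your approach gives a clean conceptual picture (a symmetric operator whose nullspace carries a form whose radical is exactly the translation modes), at the cost of establishing symmetry in full, which requires bookkeeping of all interfacial and Stokes boundary terms; the paper only ever needs the two scalar identities encoded in $\Phi$. One small point to tighten: you reduce semisimplicity to $\cln(L+K)\cap\clr(L+K)=\{0\}$ and call this ``meaningful'' because $0$ is an isolated eigenvalue of finite algebraic multiplicity; you should actually justify the equivalence, either via the spectral projection associated with the isolated eigenvalue or, as the paper does, via the Fredholm index-zero argument of Corollary~\ref{RN0c}.
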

\begin{proof}
As $L + K$ has nonempty resolvent set and $D(L)$ is compactly  embedded in 
$E_0$, $L+K$ (considered as a bounded operator from $D(L)$ to $E_0$) is Fredholm 
and has index zero. Hence, by Corollary \ref{RN0c}, it suffices to show that 
$\cln(L+K) \cap \clr(L+K) = \{ 0 \}$. We introduce the linear mapping  
\[
\begin{array}{rcl}
\Phi(f_+,f_-,\theta) & := & ( \tilde{c}_+ \int_S \theta \,dS + \int_{D_+}  f_+ 
\,dx, \tilde{c}_- \int_S \theta \,dS - \int_{D_-} f_- \,dx) 
\end{array}
\]
and verify assumptions i), ii) and iii) of Lemma \ref{RN0a} with 
\[((w_+,w_-,\theta)|(v_+,v_-,\tau)) = \langle (w_+,w_-,\theta), 
(\bar{v}_+,\bar{v}_-, \bar{\tau}) \rangle\] and $V = \mathbb{C}^2$. Observe that 
by (\ref{Lnonneg}) (and the considerations below (\ref{Lnonneg})) we have that 
\[
\langle (L + K) (w_+,w_-,\sigma),(\bar w_+,\bar w_-,\bar\sigma) \rangle = 0  
\;\; \mbox{ iff } \;\; (\bar w_+,\bar w_-,\bar\sigma) \in \cln(L+K) 
\]
The divergence theorem and the fact that $\int_S u(\theta) \cdot n \, dS = 0$  
imply that $\Phi$ vanishes on $\clr(L+K)$. Assume that $z := (w_+,w_-,\sigma) 
\in \cln(L+K) \cap \cln(\Phi)$, $(w_+,w_-,\sigma) = \sum_{j = 1}^{N+2} \alpha_j 
\eps_j$ (cf. \ref{basis}). Then, as $\int_S x_j \, dS = 0$ (using $\alpha_\pm = 
\kappa_\pm / \tilde{c}_\pm$), $\Phi z = 0$ means that   
\[
\int_S (\alpha_2 - \alpha_1) \, dS = - \int_{D_+} \frac{\alpha_1 m}{\tilde{c}_+} \, dS 
\]
and  
\[
\int_S (\alpha_2 - \alpha_1) \, dS = + \int_{D_-} \frac{\alpha_2 m}{\tilde{c}_-} \, dS.
\]
This is equivalent to $A (\alpha_1,\alpha_2) = 0$, where 
\[
A=\left(\begin{array}{cc}
- m|D_+|/\tilde{c}_+ + |S| & - |S| \\ 
+|S|& m|D_-|/\tilde{c}_- - |S| 
\end{array}\right).
\]
We calculate, using $|S| = N|D_+|$, $\lb \tilde{c} \rb = m = N-1$
\[
\mbox{det}(A) = \frac{-m |S| |D_-|}{N \tilde{c}_-} + \frac{m 
|S||D_-|}{\tilde{c}_-}  + \frac{m|S||D_+|}{\tilde{c}_+} + \frac{m |D_+| 
|D_-|}{\tilde{c}_+} > 0, 
\]
i.e. $\alpha_1 = \alpha_2 = 0$. Hence, $z \in \mathcal{O}$.
\end{proof}
We turn to the proof of Theorem \ref{MTh}.  
Lemmas  \ref{spectrum} -- \ref{decomp} allow to follow the same 
strategy as in \cite{lipr2}, Section $4$. In fact, the arguments given there 
can literally be repeated here if one only  replaces the operators $\hat{L}$, 
$L$ by the operators $\hat{L} + \hat{K}$, $L + K$, respectively. However, the proof of 
\cite{lipr2}, Lemma 4.3 has to be modified  because Theorem 2.2 
in \cite{dpz} does not apply to the nonlocal operator $L + K$. Nevertheless, 
the analogous result holds true:

\begin{lemma} \label{isohp}
Let $\omega>0$. Then 
\[
(\omega-(\hat{L} + \hat{K}),B) \in \mathcal{L}_{\rm is}(E,W_p^{-2/p}(D_\pm) 
\times[W_p^{1-3/p}(S)]^3) \cap 
\mathcal{L}_{\rm is}(E_1,L_p(D_\pm)\times[W_p^{1-1/p}(S)]^3).
\]
\end{lemma}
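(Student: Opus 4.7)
The plan is to derive the claim from the local isomorphism result for the reference operator $(\omega-\hat{L},B)$ in \cite{dpz} by treating $\hat{K}$ as a compact perturbation. Recall that $\hat K$ differs from $\hat L$ only in the third (surface) component, namely through $\rho\mapsto s'(0)\rho|_S\cdot n$, where $s'(0)\rho$ is obtained from the two-phase Stokes system \eqref{2pssphere} with Neumann-type datum $\tilde\Delta\rho\,n$.

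First, I would invoke Theorem~2.2 in \cite{dpz}, applied to the purely local operator $\hat L$, to conclude that $(\omega-\hat L,B)$ is an isomorphism between the two pairs of spaces stated in the lemma. Second, I would show that the perturbation $(\hat K,0)$ is a compact operator on both scales. Using the regularity theory for \eqref{2psC1} developed in the appendix, the Stokes solution map gains the expected two derivatives: on the $E_1$ scale, $\rho\in W_p^{3-1/p}(S)$ yields $\tilde\Delta\rho\in W_p^{1-1/p}(S)$, hence $u\in W_p^2(D_\pm)$ and $u|_S\cdot n\in W_p^{2-1/p}(S)$, which embeds compactly into the target $W_p^{1-1/p}(S)$; an analogous count on the $E$ scale gives $u|_S\cdot n\in W_p^{2-3/p}(S)\hookrightarrow\hookrightarrow W_p^{1-3/p}(S)$.

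Third, I would write
\[
(\omega-(\hat L+\hat K),B)=(\omega-\hat L,B)\circ\bigl[I-(\omega-\hat L,B)^{-1}(\hat K,0)\bigr].
\]
By the compactness from the previous step, $(\omega-\hat L,B)^{-1}(\hat K,0)$ is a compact endomorphism of $E$ (resp.\ $E_1$), so the bracketed operator is Fredholm of index zero, and its invertibility reduces to its injectivity.

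Finally, for injectivity, suppose $\mu\in E_1$ satisfies $(\omega-(\hat L+\hat K))\mu=0$ and $B\mu=0$. Then $\mu\in D(L)$ and $(L+K)\mu=\omega\mu$, but Corollary~\ref{gap} ensures that $\omega>0$ lies in the resolvent set of $L+K$, whence $\mu=0$. I expect the main technical obstacle to be the compactness step for the $E$ scale, since here the Sobolev indices are negative or of order $1-3/p$ and the bookkeeping relies on the delicate two-phase Stokes mapping properties proved only in the appendix; the rest of the argument is a standard Fredholm perturbation argument supported by the spectral information already collected in Lemma~\ref{spectrum} and Corollary~\ref{gap}.
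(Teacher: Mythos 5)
Your overall structure (isomorphism for the local operator $\hat L$, compact perturbation $\Rightarrow$ Fredholm of index zero, reduce to injectivity, use spectral information) agrees with the paper's, but there is a concrete gap in the final step: you only establish injectivity on $E_1$. Your argument ``$\mu\in E_1\Rightarrow\mu\in D(L)\Rightarrow(L+K)\mu=\omega\mu\Rightarrow\mu=0$'' uses that Corollary~\ref{gap} concerns the realization on $E_0$ with domain $D(L)\subset E_1$, so it gives nothing directly about a potential kernel element $\mu\in E\setminus E_1$. Since $E\supsetneq E_1$, injectivity on $E_1$ does not imply injectivity on $E$, and Fredholm index zero on $E$ therefore does not yet yield invertibility on $E$.

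What closes this gap — and what the paper actually spends its effort on — is a regularity bootstrap. If $\mu=(\mu_\pm,\rho)\in E$ solves $(\omega-(\hat L+\hat K),B)\mu=0$, rewrite this as
\[
(\omega-\hat L,B)\mu=(0,0,\,s'(0)\rho|_S\cdot n,\,0,0).
\]
With $\rho\in W_p^{3-3/p}(S)$, Theorem~\ref{stoop2} applied to the two-phase Stokes system \eqref{2pssphere} shows the right-hand side lies in the smaller target $L_p(D_\pm)\times[W_p^{1-1/p}(S)]^3$ (not merely in the larger $E$-scale target). Then the $E_1$-scale isomorphism for $(\omega-\hat L,B)$ upgrades $\mu$ to $E_1$, and only after this upgrade does your spectral argument apply. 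You in fact already computed the needed regularity gain of the Stokes trace when discussing compactness; the missing move is to feed that gain back into the equation to promote $\mu$ from $E$ to $E_1$. Incidentally, you slightly misidentify the ``main technical obstacle'': the compactness on the $E$-scale is routine once the Stokes mapping properties from the appendix are in hand, whereas the $E$-scale injectivity genuinely requires the bootstrap sketched above.
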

In accordance with our general notation, we denote by $W_p^{-2/p}(D_\pm) 
:=B^{-2/p}_{pp}(D_\pm)$ a Besov space of negative
differentiability order, see \cite{Tr1}. 
\begin{proof}
From Lemma 4.3 in \cite{lipr2} we know that 
\begin{equation}\label{oldreg}
(\omega-\hat{L},B) \in 
\mathcal{L}_{\rm is}(E,W_p^{-2/p}(D_\pm)\times[W_p^{1-3/p}(S)]^3) \cap 
\mathcal{L}_{\rm is}(E_1,L_p(D_\pm)\times[W_p^{1-1/p}(S)]^3).
\end{equation}
In particular, $(\omega-\hat{L},B)$ is Fredholm and has index $0$. Since 
$\hat{K}$ is a  compact perturbation, the same is true for the operator 
$(\omega-(\hat{L} + \hat{K}),B)$. Therefore it suffices to show that 
$(\omega-(\hat{L} + \hat{K}),B)$ is injective. 

We first consider this operator as 
an element of $\mathcal{L}(E_1,L_p(D_\pm)\times[W_p^{1-1/p}(S)]^3)$. Then, 
injectivity is a direct consequence of Lemma 
\ref{spectrum}. 

To prove the remaining part, assume
\[(\omega-(\hat{L} + \hat{K}),B)\mu=0,\qquad\mu=(\mu_\pm,\rho)\in E,\]
or equivalently
\begin{equation}\label{homeq}
(\omega-\hat{L},B)\mu=(0,0,s'(0)\rho|_S\cdot n,0,0).
\end{equation}
Recall that $s'(0)\rho$ is defined by the BVP (\ref{2pssphere}).
Applying Theorem \ref{stoop2} to this problem and using $\rho\in 
W_p^{3-3/p}(S)$, $p>2$, we find that the right side of (\ref{homeq}) is in 
$L_p(D_\pm)\times[W_p^{1-1/p}(S)]^3$, so (\ref{oldreg}) yields $\mu\in E_1$. 
 The result follows again by Lemma \ref{spectrum}.
\end{proof}

\section{Conclusion}
Our analysis crucially relies on the fact that the problem under consideration 
belongs to the class of parabolic evolutions, in the general sense that the 
semigroup of operators (on appropriate function spaces) arising as solution 
of the linearized evolution problem is analytic. Corresponding maximal 
regularity results allow the treatment of the nonlinearities introduced by the 
transformation to a fixed domain. As typical for the techniques used here, they 
provide smooth solutions but are (in absence of further structural 
information) restricted to ``perturbative'' results, producing 
either short-time solutions (as in \cite{lpp} for the present problem) or 
long-time solutions near equilibria or periodic solutions.

The present paper shows that these techniques are strong and versatile enough 
to treat relatively complex models in which coupled evolutions in two phases 
and on their interface as well as additional elliptic systems occur. On a 
technical level, this is reflected in the fact that we use products of spaces 
of functions with different domains of definitions and a solution operator for 
the Stokes equations. In a sense, using this solution operator allows to 
treat the present problem as a perturbed version of the problem without 
flow, with the perturbation being ``of lower order.''

The convergence result may be viewed as an application of a suitably 
generalized principle of 
linearized stability to a nonlinear parabolic problem, which is also well 
established by now. Discussing the spectrum of the linearization at an 
equilibrium provides additional structural information to conclude that a 
solution starting close to the manifold of equilibria is actually global and 
converges to this manifold at an exponential rate.

In this respect, it remains an open and interesting question whether, and how, 
structural properties like parabolicity and stability of equilibria can be 
concluded already from properties of the initial ingredients of the variational 
model, and not only from the resulting moving boundary problem.

\section{Appendix}\label{app}
\subsection{Two-phase Stokes equations} \label{stokes}
Let $C \subset \mathbb{R}^N$ be the set defined in the introduction. In this 
section  we denote by $\Omega_+$ a bounded simply connected open set with 
boundary $\partial \Omega_+$ of class $C^\infty$ such that $\bar{\Omega}_+ 
\subset C$ and define $\Omega_- := C \setminus \bar{\Omega}_+$. Moreover, $n$ 
denotes the outward unit normal field of $\partial \Omega_+$. If no confusion 
seems likely, the symbol $\partial_n$ stands for both the directional derivative 
w.r.t. $n$ and w.r.t. the outer unit normal field of $\partial C$. We are 
interested in the two-phase Stokes system 
\be \label{2psC1}
\begin{array}{rcll}
- \nu_\pm \Delta u_\pm + \nabla p_\pm & = & f_\pm &\mbox{ in $\Omega_\pm$,}\\
- \vdiv u_\pm & = & g_\pm & \mbox{ in $\Omega_\pm$,}\\
\lb \tau(u,p) \rb n & = & h & \mbox{ on $\partial \Omega_+$,} \\
\lb u \rb & = & l & \mbox{ on $\partial \Omega_+$,} \\
u_- & = & 0 & \mbox{ on $\partial C$}
\end{array}
\ee
and consider first the question of unique solvability of the simplified problem
\be \label{2psb}
\begin{array}{rcll}
- \nu_\pm \Delta u_\pm + \nabla p_\pm & = & f_\pm &\mbox{ in $\Omega_\pm$,}\\
- \vdiv u_\pm & = & g_\pm & \mbox{ in $\Omega_\pm$,}\\
\lb \tau(u,p) \rb n & = & h & \mbox{ on $\partial \Omega_+$,} \\
\lb u \rb & = & 0 & \mbox{ on $\partial \Omega_+$,} \\
u_- & = & 0 & \mbox{ on $\partial C$}.
\end{array}
\ee 
\subsubsection{Weak solutions} \label{stokeswsol}
Let $H := H_0^1(C,\mathbb{R}^N) = \{ w \in W_2^1(C,\mathbb{R}^N);\; w = 0 \mbox{ on } \partial C\}$, $Q := L_2(C)$, 
\[
a := H \times H \rightarrow \mathbb{R}, \quad (u,\varphi) \mapsto \int_{C} \nu (\eps(u):\eps(\varphi)), 
\]
$\nu = \nu_\pm$ in $\Omega_\pm$, and
\[
b := Q \times H \rightarrow \mathbb{R}, \quad (q,\varphi) \mapsto - \int_{C} q \mbox{div} \varphi.
\]
A weak solution of the system \ref{2psb} ($(f,g): C \rightarrow \mathbb{R}^{N+1}$, $h: \partial \Omega_+ \rightarrow \mathbb{R}^{N}$) is a pair $(u,[q]) \in H \times Q/\sim_{c}$ ($f \sim_{c} g :\Leftrightarrow f = g + \mbox{const}$) that satisfies 
\[
a(u,\varphi) + b(q,\varphi) = \int_{\partial \Omega_+} h \varphi + \int_{C} f \varphi + \int_{C} \nabla g \varphi \quad \mbox{ for all } \varphi \in H \quad (q \in [q])
\]
as well as 
\[
b(\psi,u) = g \quad \mbox{ for all } \psi \in Q.
\]
Due to Korn's inequality, the bilinear form $a$ is coercive on $H$. Moreover, the bilinear form $b$ induces a linear operator 
\[
B: H \rightarrow Q', \quad Bu(\psi) := b(\psi,u).
\]
Identifying $Q$ with its dual by means of the Riesz isomorphism the range of $B$ is the set $\{ r \in L_2(C);\; \int_C r = 0 \}$. Since this is a closed subset of $L_2(C)$, classical results (cf. \cite{BF91}, Section II.1) imply that there is a unique weak solution of (\ref{2psb}) for every $(f,g,h) \in L^{2}(C, \mathbb{R}^N) \times H^{1}(C, \mathbb{R}) \times L_2(\partial \Omega_+, \mathbb{R}^N)$ provided $\int_C g = 0$. 
\subsubsection{The Lopatinskii-Shapiro condition} \label{ls}
We want to show that the two-phase Stokes system (\ref{2psC1}) satisfies the Lopatinskii-Shapiro condition. W.l.o.g. we restrict ourselves to the halfspace situation, i.e. the case $\Omega_\pm := \mathbb{R}^{N-1} \times \mathbb{R}^{\pm}$. 

Reflecting $u_-$ in system (\ref{2psC1}) to the upper half space, the operator on the left hand side of system (\ref{2psC1}) can be expressed by the $(2N+2) \times (2N+2)$ and the $(2N) \times (2N+2)$ matrices of operators 
\[
A(\partial_1,...,\partial_N) := \left(\begin{smallmatrix} -\nu_+ {\bf \Delta} & \nabla & 0 & 0 \\ \nabla^T & 0 & 0 & 0 \\ 0 & 0 & -\nu_- {\bf \Delta} & \tilde{\nabla} \\ 0 & 0 & \tilde{\nabla}^T & 0 \end{smallmatrix}\right), \quad B(\partial_1,...,\partial_N) := \left(\begin{smallmatrix} {\bf B^+} & - {\bf e_N} & {\bf -B^-} & {\bf e_N} \\ {\bf I} & {\bf 0 }& {\bf -I} & {\bf 0} \end{smallmatrix}\right)
\]
both acting on vectors $(u_+,p_+,u_-,p_-)^T$ ($u_\pm := (u^1_\pm, ..., u^N_\pm)$). Here, we used the $N \times N$-matrices of operators 
\[
{\bf \Delta} := \left(\begin{smallmatrix} \Delta &  & \mbox{\large{0}} \\ & \ddots &  \\ \mbox{\large{0}} &  & \Delta \end{smallmatrix}\right), \quad {\bf B^\pm} := \nu_\pm \; \left(\begin{smallmatrix} \pm \partial_N &  & \mbox{\large{0}} & \partial_1 \\ & \ddots &  &  \vdots \\ \mbox{\large{0}} &  & \pm \partial_N & \partial_{N-1} \\  &  &  & \pm 2 \partial_N \end{smallmatrix}\right)
\]
and the notation $\tilde{\nabla} := (\partial_1,\hdots,\partial_{N-1},-\partial_N)^T$, ${\bf e_N} := (0,...,0,1) \in \mathbb{R}^N$. The operator $A$ represents a Douglis-Nirenberg elliptic system (cf. \cite{WRL}) with DN-numbers 
\[
s_1 = ... = s_N = s_{N+2} = ... = s_{2N+1} = t_1 = ... = t_N = t_{N+2} = ... = t_{2N+1} = 1, 
\]
\[
s_{N+1} = s_{2N+2} = t_{N+1} = t_{2N+2} = 0
\]
and it coincides with its principal part (note that $\sum s_j + t_j = 4N = 
\mbox{ord}(A)$). The characteristic polynomial is $\nu_+^{N-1} \nu_-^{N-1} 
(|\xi|^2 + \lambda^2)^{2N}$, where $\xi = (\xi_1,...,\xi_{N-1}) \in 
\mathbb{R}^{N-1}$ and $\lambda \in \mathbb{R}$. 

We have to determine a $2N$-dimensional space $\mathcal{M}^0$ of exponentially decaying solutions to the initial value problem $A(i\xi_1,...,i\xi_{N-1},\partial_t)(u_+,p_+,u_-,p_-) = 0$, i.e. (letting $v_\pm := u^N_\pm$)
\begin{equation} \label{ivp}
\left\{\begin{array}{rcll}
\nu_\pm (|\xi|^2 - \partial_t^2) u_\pm^j, & = &   -i \xi_j p_\pm & \quad j = 1,...,N-1, \\
\nu_\pm (|\xi|^2 - \partial_t^2) v_\pm, & = & \mp \partial p_\pm & \\
\sum_{j=1}^{N-1} \xi_j u_\pm^j & = & \pm i \partial v_\pm.
\end{array}\right.
\end{equation}
This system can be solved to the result  
\begin{equation} \label{sol1}
\left\{\begin{array}{rclll}
u_\pm^j & = & \alpha^j_\pm e^{-|\xi| t} & j = 1,...,N-1, & \alpha^j_\pm \in \mathbb{R}, \\
v_\pm   & = & \frac{\pm i}{|\xi|} (\alpha_\pm|\xi) e^{-|\xi| t}, & \alpha_\pm := (\alpha_\pm^1,...,\alpha_\pm^{N-1}) \\
p_\pm   & = & 0    
\end{array}\right.
\end{equation}
and  
\begin{equation} \label{sol2}
\left\{\begin{array}{rcll}
\tilde{u}_\pm^j & = & \beta_\pm i (\frac{\xi_j}{|\xi|^2} - \frac{t \xi_j}{|\xi|}) e^{-|\xi| t}, & j = 1,...,N-1, \\
\tilde{v}_\pm   & = & \pm \beta_\pm t e^{-|\xi| t}, \\
\tilde{p}_\pm   & = & 2 \beta_\pm \nu_\pm e^{-|\xi| t},   
\end{array}\right.
\end{equation}
$\beta_\pm \in \mathbb{R}$. Next we show that the problem
\[ 
B(i \xi_1,...,i \xi_{N-1},\partial_t)(u_+,p_+,u_-,p_-)|_{t=0} = 0
\]
possesses in $\mathcal{M}^0$ only the trivial solution. Writing simply $(u^1_+,...,u^{N-1}_+,v_+)$ instead of $(u^1_\pm,...,u^{N-1}_\pm,v_\pm) + (\tilde{u}^1_\pm,...,\tilde{u}^{N-1}_\pm,\tilde{v}_\pm)$, we assume that  
\begin{equation} \label{cont}
\Big[ (u^1_+,...,u^{N-1}_+,v_+) - (u^1_-,...,u^{N-1}_-,v_-) \Big] (0) = 0.
\end{equation}
This implies that
\begin{equation} \label{rel0}
\alpha^j_+ - \alpha^j_- + \frac{i \xi_j}{|\xi|^2} (\beta_+ - \beta_-) = 0, \;\; (j = 1,...,N-1), \quad (\alpha_+|\xi) = -(\alpha_-|\xi).
\end{equation}
Multiplication with $\xi$ yields 
\begin{equation} \label{rel1}
(\alpha_\pm|\xi) = \frac{\mp i}{2} (\beta_+ - \beta_-).
\end{equation}
If, additionally, 
\begin{equation} \label{stress}
\left[ \nu_+ \left [\left(\begin{smallmatrix} \partial_t u_+^1 \\ \vdots  \\ \partial_t u_+^{N-1} \\ \partial_t v_+ \end{smallmatrix}\right) + \left(\begin{smallmatrix} i \xi_1 v_+ \\ \vdots  \\ i \xi_{N-1} v_+ \\ \partial_t v_+ \end{smallmatrix}\right) \right] - \nu_- \left[ \left(\begin{smallmatrix} - \partial_t u_-^1 \\ \vdots  \\ - \partial_t u_-^{N-1} \\ - \partial_t v_- \end{smallmatrix}\right) + \left(\begin{smallmatrix} i \xi_1 v_- \\ \vdots  \\ i \xi_{N-1} v_- \\ - \partial_t v_- \end{smallmatrix}\right) \right] - \left(\begin{smallmatrix} 0 \\ \vdots  \\ 0 \\ p_+ - p_- \end{smallmatrix}\right) \right] (0) = 0,
\end{equation}
we find that 
\begin{equation} \label{rel2}
\begin{array}{ll}
  & \nu_+ \left[ \left(\begin{smallmatrix} - \alpha_+ |\xi| \\ - i (\alpha_+|\xi) \end{smallmatrix}\right) + \left(\begin{smallmatrix} - \xi (\alpha_+|\xi) / |\xi|  \\ - i (\alpha_+ |\xi) \end{smallmatrix}\right) + \left(\begin{smallmatrix} -2 i \beta_+ \xi / |\xi| \\  \beta_+ \end{smallmatrix}\right) + \left(\begin{smallmatrix} 0 \\ 	 \beta_+ \end{smallmatrix}\right) \right] \\
  & \\
- & \nu_- \left[ \left(\begin{smallmatrix} \alpha_- |\xi| \\ - i (\alpha_-|\xi) \end{smallmatrix}\right) + \left(\begin{smallmatrix}  \xi (\alpha_-|\xi) / |\xi|  \\ - i (\alpha_- |\xi) \end{smallmatrix}\right) + \left(\begin{smallmatrix} 2 i \beta_- \xi / |\xi| \\  \beta_- \end{smallmatrix}\right) + \left(\begin{smallmatrix} 0 \\  \beta_- \end{smallmatrix}\right) \right] \\
  & \\
- & \left[ \nu_+ \left(\begin{smallmatrix} 0 \\ 2 \beta_+ \end{smallmatrix}\right) - \nu_- \left(\begin{smallmatrix} 0  \\ 2 \beta_- \end{smallmatrix}\right) \right] \\
  & \\
 & = 0.
\end{array}
\end{equation}
By multiplying the first line in (\ref{rel2}) with $\xi$ and by using (\ref{rel1}) this gives the linear system 
\begin{equation} \label{beta}
M \vec{\beta} := \left(\begin{smallmatrix} \nu_+ + \nu_-  & \nu_+ + \nu_- \\ \nu_+ + \nu_- & - (\nu_+ + \nu_-) \end{smallmatrix}\right)   \left(\begin{smallmatrix} \beta_+ \\ \beta_- \end{smallmatrix}\right) = 0,
\end{equation}
which possesses only the trivial solution $\beta_+ = \beta_- = 0$ since $\mbox{det}(M) = -2 (\nu_+ + \nu_-)^2 < 0$. Hence, by (\ref{rel0}), (\ref{rel1}) $\alpha_+ = \alpha_-$ and $(\alpha_+|\xi) = (\alpha_-|\xi) = 0$. Using this, the first line in (\ref{rel2}) reduces to  
\[
(\nu_+ + \nu_-) |\xi| \alpha_+ = 0
\]
and hence also $\alpha_+ = \alpha_- = 0$. Therefore, the Lopatinskii-Shapiro condition is satisfied.
\subsubsection{Regularity} \label{stokesssol}
We are now interested in strong/classical solutions of the system (\ref{2psC1}) under the necessary solvability demand 
\begin{equation} \label{comp}
\int_{\Omega_+} g_+ + \int_{\Omega_-} g_- = - \int_{\partial \Omega_+} l \cdot n. 
\end{equation}
Let  
\begin{equation} \label{stoop}
\begin{array}{l}
\Lambda_{\nu_+,\nu_-} (u_+,p_+,u_-,p_-) := \\
(\nu_+ \Delta u_+ - \nabla p_+, \nu_- \Delta u_- - \nabla p_-, - \mbox{div} u_+, - \mbox{div} u_-, \lb \tau(u,p) \rb n , \lb u \rb). 
\end{array}  
\end{equation}
From Theorem $9.32$ in \cite{WRL} and Section \ref{ls} we know that the operators $\Lambda_{\nu_+,\nu_-}$, considered as bounded operator between appropriate function spaces (see Theorem \ref{stoop2} below), are Fredholm for all positive $\nu_+, \nu_-$. In order  to calculate their index, we first consider the case that $\nu_+ = \nu_- =: \nu > 0$ and determine the range of $\Lambda_{\nu,\nu}$ (for the sake of brevity we refrain from stating regularities as they can be easily added by means of classical elliptic theory and the results from \cite{L69}, Section $3.3$, $3.5$): let $l_+, l_-$ satisfy
\begin{itemize}
\item $l_+ - l_- = l$ on $\partial \Omega_+$;
\item $\int_{\Omega_\pm} g_\pm = \mp \int_{\partial \Omega_+} l_\pm \cdot n$.
\end{itemize}
Taking into account (\ref{comp}), one possible choice is $l_+ := l + l_-|_{\partial \Omega_+}$, where $l_- := \nabla L$ and $L$ solves
\be \label{contbd}
\begin{array}{rcll}
- \Delta L & = & g_- &\mbox{ in $\Omega_-$,}\\
\partial_n L & = & \frac{\int_{\Omega_-} g_-}{|\partial \Omega_+|} & \mbox{ on $\partial \Omega_+$,}\\
\partial_n L & = & 0 & \mbox{ on $\partial C$.}\\
\end{array}
\ee
Further, let $w_\pm := \nabla W_\pm$, where $W_\pm$ solve
\be \label{divI}
\begin{array}{rcll}
- \Delta W_+ & = & g_+ &\mbox{ in $\Omega_+$,}\\
\partial_n W_+ & = & l_+ \cdot n & \mbox{ on $\partial \Omega_+$,}\\
\end{array}
\ee
\be \label{divE}
\begin{array}{rcll}
- \Delta W_- & = & g_- &\mbox{ in $\Omega_-$,}\\
\partial_n W_- & = & l_- \cdot n & \mbox{ on $ \partial \Omega_+$,}\\
\partial_n W_- & = & 0 & \mbox{ on $\partial C$.}\\
\end{array}
\ee
Note that $- \mbox{div} w_\pm = g_\pm$ in $\Omega_\pm$. We extend $f_-$ and $\Delta w_-$ to $\mathbb{R}^N \setminus \bar{\Omega}_+$ in such a way that they vanish outside some open ball containing $\bar{C}$ and consider the problems 
\be \label{2psCI}
\begin{array}{rcll}
- \nu \Delta v_+ + \nabla q_+ & = & f_+ + \nu \Delta w_+ &\mbox{ in $\Omega_+$,}\\
\vdiv v_+ & = & 0 & \mbox{ in $\Omega_+$,}\\
v_+ & = & l_+ - w_+ & \mbox{ on $\partial \Omega_+$,}
\end{array}
\ee
\be \label{2psCE}
\begin{array}{rcll}
- \nu \Delta v_- + \nabla q_- & = & f_- + \nu \Delta w_- &\mbox{ in $\mathbb{R}^N \setminus \bar{\Omega}_+$,}\\
\vdiv v_- & = & 0 & \mbox{ in $\mathbb{R}^N \setminus \bar{\Omega}_+$,}\\
v_- & = & l_- - w_- & \mbox{ on $\partial \Omega_+$.} \\
\end{array}
\ee
It follows from Section 3.5 in \cite{L69} that the problems (\ref{2psCI}) and (\ref{2psCE}) possess classical solutions (since $w_\pm \cdot n = l_\pm \cdot n$ on $\partial \Omega_\pm$). Moreover, $\int_{\partial C} v_- \cdot n_{\partial C} = 0$. Next we are interested in the system
\be \label{2psC2}
\begin{array}{rcll}
- \nu \Delta u_+ + \nabla p_+ & = & 0 &\mbox{ in $\Omega_+$,}\\
- \nu \Delta u_- + \nabla p_- & = & 0 &\mbox{ in $\mathbb{R}^N \setminus \bar{\Omega}_+$,}\\
\vdiv u_+ & = & 0 & \mbox{ in $\Omega_+$,}\\
\vdiv u_- & = & 0 & \mbox{ in $\mathbb{R}^N \setminus \bar{\Omega}_+$,}\\
\lb \tau(u,p) \rb n & = & h - \lb \tau(w+v,q) \rb n& \mbox{ on $\partial \Omega_+$,} \\
\lb u \rb & = & 0 & \mbox{ on $\partial \Omega_+$,} 
\end{array}
\ee
where $v,w: \Omega_+ \cup \Omega_- \rightarrow \mathbb{R}^{N}$, $q: \Omega_+ \cup \Omega_- \rightarrow \mathbb{R}$ are defined in the obvious way. The single layer potential with density $\psi$ and w.r.t. the constant viscosity $\nu > 0$ is given by  
\begin{equation} \label{slp}
\begin{array}{rcl}
V(x,\psi) & := & \frac{1}{2 \nu \omega_N} \int_\Gamma ( \frac{1}{(n-2)|x-y|^{N-2}} + \frac{(x-y)(x-y)^T}{|x-y|^N} ) \psi(y) \;  d\sigma(y); \\
Q(x,\psi) & := & \frac{1}{\omega_N} \int_\Gamma ( \frac{(x-y)}{|x-y|^N} ) \psi(y) \;  d\sigma(y). \\
\end{array}
\end{equation}
As it can be seen from the results in \cite{L69}, Chapter 3, the restrictions $(u_\pm, p_\pm)$ of 
\[
(V(\cdot,h - \lb \tau(w+v,q) \rb n), Q(\cdot,h - \lb \tau(w+v,q) \rb n))
\]
to $\Omega_+$ and $\mathbb{R}^N \setminus \bar{\Omega}_+$, respectively, solve (\ref{2psC2}) in a classical sense, provided $h - \lb \tau(w+v,q) \rb n$ is  continuous (observe that precise regularity properties of $(u_\pm,p_\pm)$ can be obtained from the fact that the mapping $\psi \mapsto V(\cdot,\psi)|_{\partial \Omega_+}$ is a pseudodifferential operator of order $-1$ as well as regularity theory for the Stokes-Dirichlet problem, cf. Section $3.3$, $3.5$ in \cite{L69}). 

Since $u_+$ is divergence free, it follows that $\int_{\partial \Omega_+} u_+ \cdot n = 0$, and $\lb u \rb = 0$ on $\partial \Omega_+$ implies $\int_{\partial \Omega_+} u_- \cdot n = 0$. Hence, since also $u_-$ is divergence free, it follows that $\int_{\partial C} u_- \cdot n_{\partial C} = 0$. Thus, $\int_{\partial C} (u_- + v_-) \cdot n_{\partial C} = 0$.

Let $(\Phi,P)$ a (smooth across $\partial \Omega_+$) solution of the following Dirichlet problem for the Stokes equations (which exists since $\int_{\partial C} (u_- + v_-) \cdot n_{\partial C} = 0$, cf. \cite{L69}, Chapter 3): 
\be \label{Dir}
\begin{array}{rcll}
- \nu \Delta \Phi + \nabla P & = 0 &  &\mbox{ in $C$,}\\
\vdiv \Phi & = & 0 & \mbox{ in $C$,}\\
\Phi & = & - (u_- + v_- + w_-) & \mbox{ on $\partial C$.}
\end{array}
\ee
Summarizing, the pair 
\[
(w_\pm + v_\pm + u_\pm + \Phi, q_\pm + p_\pm + P) 
\]
is easily seen to solve (\ref{2psC1}) (with $\nu_+ = \nu_- = \nu$) in  a 
classical sense. Therefore, the necessary solvability demand  (\ref{comp}) is 
also sufficient. Hence, the range of $\Lambda_{\nu,\nu}$ is of codimension $1$. 
Since we know from Theorem \ref{stokeswsol} that the kernel of 
$\Lambda_{\nu,\nu}$ is one dimensional, this operator has index $0$. 
Consequently ($\nu > 0$ was arbitrary), by homotopic stability of the index, all 
members of the family $\{ \Lambda_{\nu_+, (1-t) \nu_+ + t \nu_-};\; t \in [0,1] 
\}$ have index $0$, in particular $\Lambda_{\nu_+,\nu_-}$. Since also this 
operator has a one dimensional kernel, the following theorem \ref{stoop2} can be 
deduced from the general theory of elliptic boundary value problems (cf. Section 
$4$ in \cite{Tr1}, Theorem $9.32$ in \cite{WRL}). In order to economize notation 
we introduce the quotient spaces $\tilde{\mathcal{F}} := \mathcal{F}/\sim_{c}$, 
where $\mathcal{F} \in \{ W^\alpha_p, C^\alpha, c^\alpha \}$ and $\sim_{c}$ is 
the equivalence relation introduced in Section \ref{stokeswsol}.  Here, 
$C^{\alpha}$ stands for the usual H\"older space and $c^{\alpha}$ denotes the 
little H\"older space, that is the closure of the smooth functions in 
$C^{\alpha}$.
\begin{theorem} \label{stoop2}
Let $(r,\beta,p,k) \in (0,1] \times (0,1) \times [1,\infty) \times  (\mathbb{N} 
\cup \{ 0 \})$ satisfy $k+r > 1/p$, $r \neq 1/p$ and let  $\mathcal{C} \in \{ c, 
C \}$. Suppose that $(f_\pm,g_\pm,h,l)$ of class 
\begin{itemize}
\item[i)] $W := [W^{k+r-1}_p(\Omega_\pm)]^N \times W^{k+r}_p(\Omega_\pm)  \times 
[W^{k+r-1/p}_p(\partial \Omega_+)]^N \times [W^{k+r+1-1/p}_p(\partial 
\Omega_+)]^N$ 
\item[] or  
\item[ii)] $\mathcal{C} := [\mathcal{C}^{k+\beta}(\bar{\Omega}_\pm)]^N \times  
\mathcal{C}^{k+1+\beta}(\bar{\Omega}_\pm) \times 
[\mathcal{C}^{k+1+\beta}(\partial \Omega_+)]^N \times 
[\mathcal{C}^{k+2+\beta}(\partial \Omega_+)]^N$
\end{itemize}
satisfies $\int_{\Omega_+} g_+ + \int_{\Omega_-} g_- = - \int_{\partial 
\Omega_+} l  \cdot n$.
In both cases problem  {\rm(\ref{2psC1})} possesses a solution $(u_\pm,p_\pm)$ 
which is unique up to an additive constant for $p_\pm$. This constant is the 
same in both phases. In case i) $(u_\pm,p_\pm)$ belongs to the class 
$[W^{k+r+1}_p(\Omega_\pm)]^N \times W^{k+r}_p(\Omega_\pm)$ and satisfies the a 
priori estimate 
\begin{equation} \label{apr1}
\Vert (u_\pm,[p_\pm]) \Vert_{[W^{k+r+1}_p(\Omega_\pm)]^N  \times 
\tilde{W}^{k+r}_p(\Omega_\pm)} \leq \gamma \Vert (f_\pm,g_\pm,h,l) \Vert_{W}
\end{equation}
with a positive constant $\gamma$ independent of $(f_\pm,g_\pm,h,l)$.  In case 
ii) $(u_\pm,p_\pm)$ belongs to the class 
$[\mathcal{C}^{k+2+\beta}(\bar{\Omega}_\pm)]^N \times 
\mathcal{C}^{k+1+\beta}(\bar{\Omega}_\pm)$ and satisfies the estimate 
\begin{equation} \label{apr2}
\Vert (u_\pm,[p_\pm]) \Vert_{[\mathcal{C}^{k+2+\beta}(\bar{\Omega}_\pm)]^N 
\times  \tilde{\mathcal{C}}^{k+1+\beta}(\bar{\Omega}_\pm)} \leq \gamma \Vert 
(f_\pm,g_\pm,h,l) \Vert_{\mathcal{C}}
\end{equation}
with a positive constant $\gamma$ independent of $(f_\pm,g_\pm,h,l)$. The  
symbol $[p_\pm]$ stands for the class $\{ p_\pm + \zeta;\; \zeta \in \mathbb{R} 
\}$.
\end{theorem}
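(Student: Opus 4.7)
The plan is to assemble the theorem from the preparatory ingredients already collected in Sections \ref{stokeswsol}, \ref{ls}, and the constructive discussion preceding the theorem in Section \ref{stokesssol}. The operator $\Lambda_{\nu_+,\nu_-}$ is a Douglis--Nirenberg elliptic system whose DN-numbers have been identified, and the Lopatinskii--Shapiro condition at $\partial\Omega_+$ has been verified via the halfspace reduction and the invertibility of the matrix $M$ in (\ref{beta}). The Lopatinskii--Shapiro condition at $\partial C$ is the standard Dirichlet condition for the Stokes system, which is well known to be covering. Invoking Theorem 9.32 in \cite{WRL} (for the Sobolev case i)) and the corresponding Schauder-type statement from Section 4 in \cite{Tr1} (for the H\"older case ii)) then gives that $\Lambda_{\nu_+,\nu_-}$, read as a bounded operator between the spaces on the left and right of (\ref{apr1}) respectively (\ref{apr2}) after quotienting the pressure by constants, is Fredholm with the desired interior and boundary a priori estimate modulo a finite-dimensional correction.

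Next I would pin down the kernel and the index. For the kernel, a pair in $\ker\Lambda_{\nu_+,\nu_-}$ is a weak solution of (\ref{2psb}) with $f=g=h=0$; testing against $u$ itself in the bilinear form $a$ from Section \ref{stokeswsol} and invoking Korn's inequality together with $u_-=0$ on $\partial C$ and $\lb u\rb=0$ on $\partial\Omega_+$ forces $u\equiv 0$. The momentum equations then give $\nabla p_\pm=0$, and the traction jump condition $\lb\tau(u,p)\rb n=-\lb p\rb n=0$ forces the two constants to coincide, so $\ker\Lambda_{\nu_+,\nu_-}$ is one-dimensional, generated by a global constant pressure. The index computation is already carried out in the text preceding the theorem: the explicit construction via $w_\pm$, $v_\pm$, the single-layer potential $(V,Q)$, and the auxiliary Dirichlet-Stokes problem (\ref{Dir}) shows in the case $\nu_+=\nu_-=\nu$ that the compatibility condition (\ref{comp}) is not only necessary but also sufficient for solvability, hence $\mathrm{coker}\,\Lambda_{\nu,\nu}$ is one-dimensional and $\mathrm{ind}\,\Lambda_{\nu,\nu}=0$. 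Homotopic stability of the Fredholm index along the continuous family $t\mapsto\Lambda_{\nu_+,(1-t)\nu_++t\nu_-}$, $t\in[0,1]$, then yields $\mathrm{ind}\,\Lambda_{\nu_+,\nu_-}=0$, so the range has codimension exactly one and is characterized precisely by (\ref{comp}), which is a necessary condition by the divergence theorem applied to $u_\pm$.

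Combining these facts gives existence and uniqueness (up to a common additive pressure constant) in both scales of spaces. The a priori estimates (\ref{apr1}) and (\ref{apr2}) follow either from the open mapping theorem applied to the bijective operator on the quotient, or directly from the elliptic a priori estimates in \cite{WRL, Tr1} once one knows that no compact correction term is needed thanks to uniqueness modulo the pressure constant. The main obstacle in this program is the verification of the Lopatinskii--Shapiro condition, which has already been settled in Section \ref{ls} through the explicit solution of the halfspace boundary value problem (\ref{ivp})--(\ref{sol2}) and the nondegeneracy of $M$ in (\ref{beta}); everything else is bookkeeping with standard elliptic theory plus the potential-theoretic construction that secures surjectivity (modulo (\ref{comp})) in the equal-viscosity case, which is the step where one must be careful about matching divergence and boundary traces through the auxiliary fields $w_\pm$, $v_\pm$ and $\Phi$.
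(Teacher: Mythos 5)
Your proposal follows essentially the same route as the paper: Fredholm property from the Douglis--Nirenberg ellipticity and the Lopatinskii--Shapiro verification of Section \ref{ls}, the one-dimensional kernel from the weak-solution uniqueness of Section \ref{stokeswsol}, index zero in the equal-viscosity case via the explicit layer-potential/auxiliary-problem construction showing sufficiency of (\ref{comp}), homotopy invariance of the index to reach general $(\nu_+,\nu_-)$, and a priori estimates from the general elliptic theory of \cite{WRL, Tr1}. The additional remark about the Dirichlet condition at $\partial C$ being covering, and the explicit Korn-inequality argument for the kernel, are useful clarifications but do not change the argument.
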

For a given domain $\bar{\Omega}_+ \subset C$ with smooth boundary we denote by  
$\Lambda_{\Omega_\pm}$ the operator defined in (\ref{stoop}). Using this 
notation (and those defined in Section \ref{lineq}), we are able to work out 
that the solution operator $s(\rho)$ of the problem (\ref{stosolrho}) depends 
smoothly on $\rho$. 
\begin{corollary} \label{trastoop}
There is a neighbourhood $U$ of $0$ in $W_p^{3-3/p}(S)$ such that $s(\rho)$ is 
well  defined for all $\rho \in U \cap \mbox{Ad}$ and $s \in C^{\infty}(U \cap 
\mbox{Ad},[W^{2-2/p}_p(D_\pm)]^N)$. Moreover, if $\delta \geq 0$ is given, then 
there is a neighbourhood $V$ of $0$ in $\mathbb{E}(\delta)$ such that 
$\tilde{s}$ given by   
\[
\tilde{s}((\mu_\pm,\sigma))(t) := (0,0,s(\sigma(t))|_S \cdot n_S)    
\]
is well defined for all $(\mu_\pm, \sigma) \in V$ and  $\tilde{s}  \in 
C^{\infty}(V,L_p(\mathbb{R}^+,E_0))$. 
\end{corollary}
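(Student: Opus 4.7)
The plan is to derive an elliptic problem for $s(\rho)$ on the fixed reference domains $D_\pm$ by pulling back (\ref{stosolrho}) along the Hanzawa diffeomorphism $\theta_\rho$, and then to treat this problem by a perturbation argument around $\rho=0$. Setting $\pi(\rho) := \theta_\rho^* q$ and transforming differential operators and the normal as in Section \ref{lineq}, the chain rule turns (\ref{stosolrho}) into a system of the form
\[
\Lambda(\rho) \bigl( s(\rho), \pi(\rho) \bigr) = \bigl( 0, 0, H(\rho)\, n(\rho), 0 \bigr),
\]
where $\Lambda(\rho)$ is a Douglis--Nirenberg elliptic operator on $D_\pm$ whose coefficients depend smoothly on $\rho \in W_p^{3-3/p}(S)$ via $\theta_\rho$, $D\theta_\rho$, $(D\theta_\rho)^{-1}$, and $n(\rho)$. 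Smooth dependence follows from the regularity of the Hanzawa construction (\ref{Fort}) together with the fact that the Sobolev spaces at hand form multiplication algebras (using $p > N+2$). At $\rho=0$ one recovers $\Lambda(0) = \Lambda_{D_\pm}$ in the notation of (\ref{stoop}).

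Theorem \ref{stoop2} asserts that $\Lambda_{D_\pm}$ is an isomorphism modulo the one-dimensional kernel of constants in the pressure, under the single compatibility condition $\int_{D_+} g_+ + \int_{D_-} g_- = -\int_S l \cdot n$; this condition is trivially satisfied by the data $(0, 0, H(\rho) n(\rho), 0)$. Moreover $\rho \mapsto H(\rho)$ is smooth from $W_p^{3-3/p}(S)$ into $W_p^{1-3/p}(S)$ by the quasilinear structure of the mean curvature, and $\rho \mapsto n(\rho)$ is smooth into $W_p^{2-3/p}(S)$. Because isomorphisms form an open subset of the space of bounded operators and $\rho \mapsto \Lambda(\rho)$ is smooth, $\Lambda(\rho)$ remains an isomorphism (modulo constants in pressure) for $\rho$ in some neighbourhood $U \subset W_p^{3-3/p}(S)$ of the origin. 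Inverting and composing with the smooth right-hand side yields $s \in C^\infty(U \cap \mbox{Ad}, [W_p^{2-2/p}(D_\pm)]^N)$. Note that $s(0) = 0$, because at $\rho=0$ the constant mean curvature of $S$ is balanced by a piecewise-constant pressure with vanishing velocity.

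For the second assertion I exploit the embedding (\ref{emb0}) $\mathbb{E}(\mathbb{R}^+) \hookrightarrow C([0,\infty), E)$, which combined with the exponential weight yields $\mathbb{E}(\delta) \hookrightarrow C_b([0,\infty), E)$ for $\delta \ge 0$. Hence, for $(\mu_\pm, \sigma)$ in a sufficiently small ball $V \subset \mathbb{E}(\delta)$ around $0$, the values $\sigma(t)$ lie uniformly in $U \cap \mbox{Ad}$. Since $s(0)=0$ and $s$ is smooth, Taylor's theorem combined with the continuous trace $W_p^{2-2/p}(D_\pm) \to W_p^{2-3/p}(S) \hookrightarrow W_p^{1-1/p}(S)$ (valid since $p > 2$) yields a local Lipschitz estimate
\[
\bigl\| s(\sigma(t))|_S \cdot n_S \bigr\|_{W_p^{1-1/p}(S)} \le C\, \| \sigma(t) \|_{W_p^{3-1/p}(S)}
\]
uniformly in $t$. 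Integrating the $p$-th power and using $\sigma \in L_p(\mathbb{R}^+, W_p^{3-1/p}(S))$ shows that $\tilde{s}((\mu_\pm, \sigma)) \in L_p(\mathbb{R}^+, E_0)$; smoothness of $\tilde{s}$ then follows from the smoothness of $s$ via standard Nemitsky-type arguments in Bochner spaces. The main technical obstacle lies in the first step, namely carefully verifying that the pulled-back operator $\Lambda(\rho)$ -- in particular the transformed divergence equation and boundary stress -- inherits smooth dependence on $\rho$ in the correct functional-analytic topology; once this is established, the subsequent perturbation and composition arguments are routine.
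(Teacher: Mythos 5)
Your proof is correct and follows essentially the same route as the paper: pull back to $\Lambda(\rho) := \theta^{*}_\rho \Lambda_{(\Omega_\rho)_\pm}\theta_{*}^\rho$, invoke Theorem \ref{stoop2} at $\rho=0$ and a perturbation/openness argument for small $\rho$, and then use the embedding (\ref{emb0}) together with Nemytskij-type smoothness for the Bochner-space statement. The paper's proof is terser (deferring the transformation and Nemytskij details to \cite{lpp}, \cite{lipr1}, \cite{RuSi}), whereas you spell out the compatibility condition, $s(0)=0$, and the trace bookkeeping explicitly; these additions are all accurate.
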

\begin{proof}
Let $\Lambda(\rho) := \theta^{*}_\rho \Lambda_{(\Omega_\rho)_\pm} 
\theta_{*}^\rho$.  Carrying out the transformation of the differential operators 
involved in (\ref{stosolrho}) (cf. the proofs of Theorem $3.1$ in \cite{lpp}, 
Lemma $4.2$ in \cite{lipr1}) we obtain 
\[
s(\rho) = P \Lambda(\rho)^{-1}(0,0,H(\rho)n(\rho),0),
\]
where $P(u_+,p_+,u_-,p_-) := u$ and $\Lambda$, $H$ and $n$ depend all smoothly 
on $\rho$. Since $\Lambda(0)$ is precisely the operator considered in Theorem  
\ref{stoop2} (with $\Omega_\pm := D_\pm$), the first assertion is obtained by  
standard perturbation arguments for isomorphisms. 

Observe that due to the embedding (\ref{emb0}) we may assume  that 
$\sigma[\mathbb{R}^+] \subset \mbox{Ad} \cap W_p^{3-3/p}(S)$ provided $\Vert 
(\mu_\pm,\sigma) \Vert_{\mathbb{E}(\delta)}$ is small enough. Hence the second 
assertion is a consequence of mapping and  smoothness properties of concerning 
Nemytskij operators, cf. \cite{RuSi}. 
\end{proof}

\subsection{A few functional analytic tools}
\begin{lemma} \label{RN0a}
Let $H,V$ be complex vector spaces and $A: D(A) \subset H \rightarrow H$ be a linear operator. Let $(\cdot|\cdot)$ be a sesquilinear form on $D(A) \times D(A)$, $\Phi \in \mathcal{L}(H,V)$ and  
\[
\begin{array}{rcll}
\mathcal{O} & := & \{ z \in D(A); \; (z|h) = 0 \mbox{ for all } h \in D(A) \}.
\end{array}
\]
Assume that 
\begin{itemize}
\item[i)]   $(Au|u) = 0$ iff $u \in \cln(A)$;
\item[ii)]  $\clr(A) \subset \cln(\Phi)$;
\item[iii)] $\cln(\Phi) \cap \cln(A) \subset \mathcal{O}$;
\end{itemize}
Then $\clr(A) \cap \cln(A) = \{ 0 \}$. 
\end{lemma}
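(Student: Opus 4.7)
The proof is a short chain of implications once one unpacks the definitions, so the plan is essentially: take a candidate element of the intersection and run it through the three hypotheses in order.

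Specifically, I would start by picking an arbitrary $z \in \clr(A) \cap \cln(A)$ and writing $z = Au$ for some $u \in D(A)$, while simultaneously recording that $Az = 0$. The first step is to apply hypothesis (ii) to conclude that $z \in \cln(\Phi)$, since $z$ lies in $\clr(A)$. Combined with $z \in \cln(A)$, hypothesis (iii) then forces $z \in \mathcal{O}$.

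The key move is to exploit the defining property of $\mathcal{O}$: since $(z|h) = 0$ for every $h \in D(A)$, we are free to test against $h = u$, obtaining $(Au|u) = (z|u) = 0$. Hypothesis (i) now kicks in and gives $u \in \cln(A)$, i.e.\ $Au = 0$, so $z = 0$.

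There is no genuine obstacle here: the lemma is an abstract bookkeeping statement whose only subtle point is remembering that one may legitimately test the $\mathcal{O}$-orthogonality relation against the preimage $u$ rather than against $z$ itself. That single step is where hypotheses (i)--(iii) lock together, and it is what makes the whole argument work in essentially three lines.
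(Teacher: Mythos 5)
Your proof is correct and is essentially identical to the one given in the paper: you take $z = Au \in \clr(A)\cap\cln(A)$, push it through (ii) and (iii) to land in $\mathcal{O}$, test the orthogonality relation against the preimage $u$ to get $(Au|u)=0$, and close with (i). The only cosmetic difference is the name of the preimage variable.
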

\begin{proof}
Let $z \in \clr(A) \cap \cln(A)$, $z = Ay$ for some $y \in D(A)$. Then ( because of ii), iii) ) $z \in \mathcal{O}$ and 
\[
0 = (z|y) = (Ay|y), 
\]
and hence, because of i), $y \in \cln(A)$ i.e. $z = Ay = 0$. 
\end{proof}
\begin{lemma} \label{specnonneg}
Let $H$ be a complex vector space, $A: D(A) \subset H \rightarrow H$ and let $(\cdot|\cdot)$ be a sesquilinear  form on $D(A) \times D(A)$. Assume additionally that  
\begin{itemize}
\item[i)]  $(Au|u) \in [0,\infty)$ for all $u \in D(A)$;
\item[ii)]  $(u|u) \in [0,\infty)$ for all eigenvectors $u$ of $A$;
\item[iii)]  if $(u|u) = 0$ for some eigenvector $u$ of $A$, then $u \in \cln(A)$.
\end{itemize}
Then all eigenvalues of $A$ are nonnegative.
\end{lemma}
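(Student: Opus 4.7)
The plan is to pick an arbitrary eigenvalue $\lambda$ of $A$ with eigenvector $u\neq 0$, show $\lambda(u|u)\in[0,\infty)$ by combining the eigenvalue relation with hypothesis (i), and then split into the cases $(u|u)>0$ and $(u|u)=0$. Hypothesis (ii) guarantees $(u|u)$ is a nonnegative real number, so these are the only two cases, and each forces $\lambda\geq 0$ for a different reason.

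More concretely, first I would use sesquilinearity (linear in the first slot) to write $(Au|u)=(\lambda u|u)=\lambda(u|u)$. By hypothesis (i) this scalar is a nonnegative real number. In the generic case $(u|u)>0$, the quotient $\lambda=(Au|u)/(u|u)$ is then a ratio of a nonnegative real by a strictly positive real, which is itself nonnegative real, and we are done. The only remaining situation is when the seminorm $(u|u)$ vanishes on the eigenvector; this is where hypothesis (iii) does the work: it forces $u\in\cln(A)$, i.e.\ $Au=0$, so combined with $Au=\lambda u$ and $u\neq 0$ we obtain $\lambda=0$, which is again nonnegative.

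There is essentially no obstacle, the argument is a short unwinding of the three hypotheses; the only point requiring a moment of care is the degenerate case where $(u|u)=0$, because there the ratio $(Au|u)/(u|u)$ is meaningless and one genuinely needs the extra assumption (iii) to conclude. The sesquilinearity convention (linear in the first, conjugate-linear in the second, as suggested by the way $\langle\cdot,\cdot\rangle$ is used in the proof of Lemma \ref{spectrum}) is used only to obtain $(Au|u)=\lambda(u|u)$; with the opposite convention one would get $\bar\lambda(u|u)$ instead, which yields the same conclusion because $(u|u)$ is real and nonnegative.
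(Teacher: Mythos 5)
Your proof is correct and essentially identical to the paper's: both use $(Au|u)=\lambda(u|u)$ together with hypothesis (i) to deduce nonnegativity once $(u|u)>0$, and both invoke (ii) and (iii) to dispose of the degenerate case $(u|u)=0$. The paper merely phrases it slightly more compactly by assuming $\lambda\neq0$ from the start, so that (iii) directly yields $(u|u)>0$ without a separate case.
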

\begin{proof}
Let $\lambda \neq 0$ be a (possibly complex) eigenvalue of $A$. Then, if $u \in 
D(A)$ is a corresponding eigenvector, we have by ii) and iii)  that $(u|u) > 0$. 
The assertion follows from $\lambda (u|u) = (\lambda u|u) = (Au|u) \geq 0$.
\end{proof}
\begin{lemma} \label{RN0b}
Let $X$ be a Banach space, $N,R$ linear  subspaces with the properties that $N 
\cap R = \{ 0 \}$, $\mbox{dim}(N) = \mbox{codim}(R) = M < \infty$ and that $R$ 
is closed. Then the quotient map $Q: X \rightarrow X/R$, $y \mapsto y + R$ 
induces a topological isomorphism from $N$ onto $X/R$ and it vanishes on $R$. 
Moreover, $X = N \oplus R$ algebraically and topologically.
\end{lemma}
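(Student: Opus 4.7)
First I would observe that $Q$ vanishes on $R$ trivially, since $Q(r) = r + R = R = 0_{X/R}$ for every $r \in R$. The nontrivial claim is that $Q|_N : N \to X/R$ is a topological isomorphism.

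For injectivity, suppose $n \in N$ with $Q(n) = 0$; this means $n \in R$, hence $n \in N \cap R = \{0\}$. So $Q|_N$ is injective. Since $\dim N = \codim R = \dim(X/R) = M$ is finite, any injective linear map between these $M$-dimensional spaces is automatically a bijection. Both $N$ (with the norm induced from $X$) and $X/R$ (the quotient Banach space, using closedness of $R$) are finite-dimensional normed spaces of the same dimension, so every linear bijection between them is a topological isomorphism by the standard fact that all norms on a finite-dimensional vector space are equivalent. This gives the first assertion.

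For the algebraic decomposition, given $x \in X$, set $n := (Q|_N)^{-1}(Q(x)) \in N$ and $r := x - n$. Then $Q(r) = Q(x) - Q(n) = 0$, so $r \in R$, and $x = n + r$. Uniqueness of this decomposition is immediate from $N \cap R = \{0\}$: if $n_1 + r_1 = n_2 + r_2$ then $n_1 - n_2 = r_2 - r_1 \in N \cap R$.

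Finally, for the topological part, the projection onto $N$ along $R$ is $\pi_N := (Q|_N)^{-1} \circ Q : X \to N$, which is a composition of continuous maps ($Q$ is continuous as a quotient map, and $(Q|_N)^{-1}$ is continuous by the topological isomorphism established above). Hence $\pi_R := \Id_X - \pi_N : X \to R$ is continuous as well, showing that the direct sum $X = N \oplus R$ is topological. I do not see any serious obstacle here; the only point that requires a little care is making sure the quotient $X/R$ really is a Banach space (which uses closedness of $R$) so that the automatic continuity argument in finite dimensions applies.
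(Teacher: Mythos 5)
Your proof is correct and follows essentially the same route as the paper: both build the continuous projection $(Q|_N)^{-1}\circ Q$ and read off the decomposition from it. The only minor difference is how the kernel of this projection is identified with $R$ — you do it by directly constructing $r = x - n$ and checking $Q(r)=0$, while the paper argues by dimension counting ($\dim(\mathcal{N}(P)/R) = \dim(X/R) - \dim(X/\mathcal{N}(P)) = 0$); both are fine.
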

\begin{proof}
It straightforward to check that $Q|_N$  is a topological isomorphism onto the 
finite dimensional Banach space $X/R$ and that $Q$ vanishes on $R$. Moreover, $P 
:= (Q|_{N})^{-1} \circ Q$ is a continuous projection of $X$ onto $N$, hence
\[
X = N \oplus \mathcal{N}(P), \quad \mbox{dim}(X/\mathcal{N}(P)) = M. 
\]
Since $R \subset \mathcal{N}(P)$ and $\mbox{dim}(X/R) = M$, we  find 
$\mbox{dim}(\mathcal{N}(P)/R) = \mbox{dim}(X/R) - \mbox{dim}(X/\mathcal{N}(P)) = 
0$ and thus $\mathcal{N}(P) = R$.
\end{proof}
Since the range of a Fredholm operator is always closed, we get the following
\begin{corollary} \label{RN0c} 
If $X,Y$ are Banach spaces such that $Y \subset X$, and if $F \in 
\mathcal{L}(Y,X)$ is a Fredholm operator of index $0$ that satisfies 
$\mathcal{R}(F) \cap \mathcal{N}(F) = \{ 0 \}$, then $X = \mathcal{N}(F) \oplus  
\mathcal{R}(F)$ algebraically and topologically.  Moreover, $P := 
(Q|_{\mathcal{N}(F)})^{-1} \circ Q$ is a continuous projection of $X$ onto 
$\mathcal{N}(F)$.
\end{corollary}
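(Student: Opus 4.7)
The claim is essentially a direct repackaging of Lemma \ref{RN0b}, so the plan is simply to verify that its hypotheses are met in the Fredholm setting and then invoke it.

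First I would set $N := \mathcal{N}(F)$ and $R := \mathcal{R}(F)$, viewed as linear subspaces of $X$. Since $F \in \mathcal{L}(Y,X)$ is Fredholm, $\dim N < \infty$, $\mathrm{codim}\, R < \infty$, and $R$ is closed in $X$ (closedness of the range is part of the Fredholm property). The index-zero assumption gives $\dim N = \mathrm{codim}\, R =: M < \infty$, while $\mathcal{R}(F) \cap \mathcal{N}(F) = \{0\}$ is exactly the remaining hypothesis of Lemma \ref{RN0b}.

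Applying that lemma, I would conclude that the quotient map $Q: X \to X/R$ restricts to a topological isomorphism from $N$ onto $X/R$, that $Q$ vanishes on $R$, and that $X = N \oplus R$ algebraically and topologically, i.e.\ $X = \mathcal{N}(F) \oplus \mathcal{R}(F)$. The explicit formula for the associated projection is then read off from the proof of Lemma \ref{RN0b}: $P := (Q|_N)^{-1} \circ Q$ is a composition of a bounded linear map and a topological isomorphism of Banach spaces, hence a continuous projection of $X$ onto $\mathcal{N}(F)$.

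There is really no hard step here; the only thing to check carefully is that all three numerical/topological conditions of Lemma \ref{RN0b} are built into the Fredholm-of-index-zero assumption, most notably that $\mathcal{R}(F)$ is automatically closed. Once this observation is made, the statement follows immediately by quotation of Lemma \ref{RN0b}, with no further computation needed.
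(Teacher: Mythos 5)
Your proof is correct and follows the same route as the paper, which simply observes that the range of a Fredholm operator is closed and that the index-zero assumption supplies $\dim\mathcal{N}(F)=\mathrm{codim}\,\mathcal{R}(F)<\infty$, so Lemma \ref{RN0b} applies directly.
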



\begin{thebibliography}{99}
\begin{small}
\bibitem{BF91} {\sc Brezzi, F., Fortin, M.:}  Mixed and Hybrid Finite Element 
Methods,  {\em Springer, New York} 1991.
\bibitem{L69} {\sc Ladyzhenskaya, O.:} Boundary  Value Problems for Elliptic 
Systems,  {\em Cambridge University Press} 1995.
\bibitem{WRL} {\sc Wloka, J.T., Rowley, B., Lawruk, B.:}  The Mathematical 
Theory of Viscous Incompressible Flow,  {\em Gordon and Breach, New York} 1969.
\bibitem{dpz} {\sc Denk, R., Pr\"u{\ss}, J., Zacher, R.:}  Maximal
$L_p$-regularity of  parabolic problems with boundary dynamics of relaxation
type, {\em Journ. Funct. Anal.} {\bf 255} (2008) 3149--3187.
\bibitem{E04} {\sc Escher, J.:} Classical solutions to a moving boundary
problem for an elliptic-parabolic system,  { \em Interfaces and free boundaries}, {\bf 6} (2004), 175-193.
\bibitem{MSM} {\sc Escher, J., Simonett, G.:} A center manifold analysis for 
the Mullins-Sekerka model, {\em J. Differential Equations} {\bf 143} (1998),
267--292.
\bibitem{Ka}  {\sc Kato, T.:} {\em Perturbation Theory for linear Operators},
{Springer, Berlin}, 1966.
\bibitem{lipr1} {\sc Lippoth, F., Prokert, G.:} Classical solutions for a one
phase osmosis model,  {\em Journal of Evolution Equations} {\bf 12} (2) (2012), 413--434.
\bibitem{lipr2} {\sc Lippoth, F., Prokert, G.:} Stability of equilibria for a
two-phase osmosis model, {\em NoDEA Nonlinear Diff. Eq. Appl.} {\bf 21} (2014) 129--149.
\bibitem{lpp} {\sc Lippoth, F., Peletier, M., Prokert, G.:} {\em A moving boundary problem for the Stokes equations
involving osmosis: variational modelling and short-time well-posedness}, submitted.

\bibitem{meurs}{\sc Meurs, P.J.P.v.:} {\em Osmotic cell swelling in the fast diffusion limit}, MSc thesis, Eindhoven University of Technology 2011, {\tt http://alexandria.tue.nl/extra1/afstversl/wsk-i/meurs2011.pdf}.
\bibitem{Pi} {\sc Pickard, W.F.:} Modelling the Swelling Assay for Aquaporin
Expression,  { \em J. Math. Biol.} {\bf 57} (2008), 883-903.
\bibitem{PSZ} {\sc Pr\"u{\ss}, J., Simonett, G., Zacher, R.:} Qualitative 
behaviour of solutions for thermodynamically consistent Stefan problems with
surface tension, { \em Arch. Ration. Mech. Anal.} {\bf 207} (2013), 611-667.
\bibitem{PSZa} {\sc Pr\"u{\ss}, J., Simonett, G., Zacher, R.:} On normal stability for nonlinear parabolic equations, 
{ \em Discrete Contin. Dyn. Syst. 2009, Dynamical Systems, Differential Equations and Applications}, 7th AIMS Conference, suppl., 612-621.
\bibitem{rub}{\sc Rubinstein, L., Martuzans, B.:} {\em Free Boundary Problems
Related to  Osmotic Mass Transfer Through Semipermeable Membranes}, Gakkotosho, Tokyo 1995.
\bibitem{RuSi} {\sc Runst, T., Sickel, S.:} Sobolev spaces of fractional order, 
Nemitskij operators, and non-linear partial differential equations, {\em de
Gruyter Series in Nonlinear Analysis and Applications}, New York, 1996.
\bibitem{Tr1}  {\sc Triebel, H.:} {\em Theory of Function Spaces},
{Birkh\"auser, Basel}, 1983.
\bibitem{Ve92} {\sc Verkman, A.S.:} Water Channels in Cell Membranes,  { \em Annu. Rev. Physiol.} { \bf 54} (1992), 97-108.
\bibitem{Ve00} {\sc Verkman, A.S.:} Solute and Makromolecular Diffusion  in
Cellular Aqueous Compartments, { \em Trends Biochem. Sci.} { \bf 23} (2000), 27-33.
\bibitem{zaal1} {\sc Zaal, M.M.:} {\em Linear Stability of Osmotic Cell Swelling}, MSc thesis, Vrije Universiteit Amsterdam, 2008. {\tt http://www.few.vu.nl/~mzl400/bin/scriptie.pdf}.
\bibitem{zaal2} {\sc Zaal, M.M.:} Cell Swelling by Osmosis: a Variational Approach, {\em Interfaces and Free Boundaries}, {\bf 14} (2012) 487--520.
\end{small}
\end{thebibliography}
\end{document}